\documentclass[12pt]{amsart}
\makeatletter
\@namedef{subjclassname@2020}{\textup{2020} Mathematics Subject Classification}
\makeatother

\usepackage[margin=1in]{geometry}
\usepackage[english]{babel}
\usepackage[utf8]{inputenc}
\usepackage{subcaption}
\usepackage{amsmath}
\usepackage{amssymb}
\usepackage{amsfonts}
\usepackage{amsthm}
\usepackage{mathdots}
\usepackage{mathrsfs}
\usepackage[all]{xy}
\usepackage[pdftex]{graphicx}
\usepackage{color}
\usepackage{cite}
\usepackage{url}
\usepackage{indent first}
\usepackage[labelfont=bf,labelsep=period,justification=raggedright]{caption}
\usepackage[english]{babel}
\usepackage[utf8]{inputenc}
\usepackage{hyperref}
\usepackage[colorinlistoftodos]{todonotes}
\usepackage{tkz-fct}
\usepackage{tikz}
\usetikzlibrary{calc}
\usepackage{pgfplots}
\usepackage{multicol}
\PassOptionsToPackage{dvipsnames,svgnames}{xcolor}
\usepackage{textcomp}
\usepackage{bbm}
\usepackage[ruled,vlined]{algorithm2e}
\usepackage[shortlabels]{enumitem}
%\newcolumntype{C}{>{$}c<{$}}

% \topmargin 0.1cm
% \oddsidemargin 0.5cm
% \evensidemargin 0.5cm
% \textwidth 14cm 
% \textheight 20.2cm

% \setlength{\oddsidemargin}{0.25in}
% \setlength{\evensidemargin}{0.25in}
% \setlength{\textwidth}{6in}
%\setlength{\topmargin}{-0.25in}
%\setlength{\textheight}{8in}

\newcommand{\NN}{\mathbb{N}}

\newcommand{\RR}{\mathbb{R}}

\newcommand{\mcG}{\mathcal{G}}

\newcommand{\mcO}{\mathcal{O}}

%Masato macro***** please do not change
\renewcommand{\k}[1]{\ensuremath{\left({#1}\right)}}
\newcommand{\f}{\frac}
\newcommand{\ff}{\frac}
\newcommand{\mg}{{+\infty}}
\newcommand{\z}{\zeta}
\newcommand{\p}{\pi}
\newcommand{\twon}{\{2\}^N}
\newcommand{\nwon}{\{2\}^n}

\renewcommand{\a}{\alpha}

%*****

\theoremstyle{plain}
\newtheorem{thm}{Theorem}
\newtheorem{lemma}[thm]{Lemma}
\newtheorem{lem}[thm]{Lemma}
\newtheorem{cor}[thm]{Corollary}
\newtheorem{prop}[thm]{Proposition}

\theoremstyle{definition}
\newtheorem{defn}[thm]{Definition}

\newtheorem{conj}[thm]{Conjecture}

\newtheorem{ex}[thm]{Example}

\theoremstyle{remark}
\newtheorem{rem}[thm]{Remark}
\newtheorem{rmk}[thm]{Remark}

\numberwithin{equation}{section}
\numberwithin{thm}{section}

\begin{document}
\title[Evaluation of Odd Euler Sums, $\zeta(3,2,\ldots,2)$ and $t\left(3,2,\ldots,2\right)$]{Integral Evaluation of Odd Euler Sums, Multiple $t$-value $t\left(3,2,\ldots,2\right)$ and Multiple zeta value $\zeta(3,2,\ldots,2)$}

\author{Sarth Chavan}
\author{Masato Kobayashi}
\author{Jorge Layja}
\subjclass[2020]{Primary: 11M$32$; Secondary: 11M$06$; 40B$05$}

\address{Euler Circle, Palo Alto, California 94306. sarth5002@outlook.com}
\address{Department of Engineering\\
Kanagawa University, 3-27-1 Rokkaku-bashi, Yokohama 221-8686, Japan. masato210@gmail.com}
\address{Department of Mathematics, Unversidad De Las Am\`ericas Puebla, Cholula, Mexico.}
\keywords{Euler sums; Polylogarithm; Multiple zeta values; Multiple $t$-values; Riemann zeta function; Inverse sine function; Inverse hyperbolic tangent function.}

	\maketitle
\begin{abstract}
    %In this article, 
    We construct an analytic approach to evaluate odd Euler sums, multiple zeta value $\zeta(3,2,\ldots,2)$ and multiple $t$-value $t\left(3,2,\ldots,2\right)$.  Moreover, we also conjecture a closed expression for multiple $t$-value $t\left(2,\ldots,2,1\right)$.
\end{abstract}

%\tableofcontents
\section{Introduction and Main Results}
\subsection{Euler sums}

The classical \emph{Euler sums} are infinite series
\begin{equation}H\left(p_1, p_2, \ldots, p_k, q\right)=\sum_{n=1}^\mg\f{H_n^{\left(p_1\right)}H_n^{\left(p_2\right)}
\ldots H_n^{(p_k)}
}{n^q}\end{equation}
where $p_1,p_2,\ldots, p_k \in \NN$, $q \geqslant 2$  and 
$H_n^{\left(p\right)}=\sum_{k=1}^nk^{-p}$ is the generalized harmonic number. 
%Euler sums plays a significant role in many areas of number theory. 

This theory indeed dates back to Euler and Goldbach, while recent research on this topic has been quite active. For an early
introduction and study on the evaluations of  classical Euler sums, the readers may consult in Flajolet and Salvy’s paper \cite{flsa}, in which they have developed a contour integral representation approach to the evaluation of Euler sums. %involving the classical harmonic numbers. 
For some recent progress, the readers are referred to \cite{Wang},\cite{Xu1}, \cite{Xu2}
and references therein. 

In particular, \emph{linear Euler sums} $H(p, q)$ with $p, q\geqslant2$  satisfy the \emph{reflection formula}
\[H(p, q)+H(q, p)=\z(p)\,\z(q)+\z(p+q)\]
as V\u{a}lean has discussed in \cite{valean}.
As a result of which, in the evaluation of $H(p, q)$, Riemann zeta values $\z(p)$ with $p\geqslant 2$ often show up with rational coefficients.

\subsection{Multiple zeta values and multiple $t$-values}
\emph{Multiple zeta values} (MZVs) are real numbers, originally defined by Euler, that
have been much studied in recent years because of their many surprising properties and the many places they appear in mathematics and mathematical
physics, ranging from periods of mixed Tate motives \cite{br} to values of Feynman
integrals in perturbative quantum field theory. 

%The study of multiple zeta values began in the early 1990s with the works of Hoffman [13]
%and Zagier [26], and has attracted a lot of research in the last two decades. For detailed history and applications, the readers may consult in the book of Zhao [27].

In fact, multiple zeta values (MZVs) are simply a variant of the Euler sums. For positive integers $i_1, i_2 \ldots, i_k$ where $i_1\geqslant 2$, the infinite series 
\begin{equation}
\label{mzv}
\z\left(i_1, i_2, \ldots, i_k\right)
=
\sum_{
\substack{
n_1>n_2>\cdots >n_k\geqslant 1
%\\
%n_j\, \mathrm{odd}
}
}
\frac{1}{n_1^{i_1}n_2^{i_2}\cdots n_k^{i_k}}
\end{equation}
is usually called a \emph{multiple zeta value}. 
Weight of the index $\left(i_1, i_2, \ldots, i_k\right)$ 
is $i_1+i_2+\cdots+i_k$ and its depth is $k$.
The study of multiple zeta values began in the early 1990s with the works of Hoffman \cite{k1}
and Zagier \cite{k2}, and has attracted a lot of research in the last two decades. For detailed history
and applications, the readers may consult  the book of Zhao \cite{Zhao}.

One of the main problems in this topic is to understand the
$\mathbf{Q}$-linear dependence of MZVs.
Some MZVs are a combination of single zeta values 
while according to Brown, there certainly seems to exist exceptions such as $\z(5, 3)$ \cite{br}. We usually call the MZVs $\z(i_1, i_2, \ldots, i_k)$
with $i_j\in \{2, 3\}$ \emph{Hoffman elements}. In particular, F. Brown \cite{br} proved that every multiple zeta value is a $\mathbf{Q}$-linear combination of Hoffman elements. Moreover,  Zagier \cite{za} showed explicit relations among Hoffman elements in the form $\z(2, \ldots, 2, 3, 2, \ldots, 2)$.

In a recent paper \cite{hoffmann}, Hoffman introduced and studied an \emph{odd} variant of the MZVs:
\begin{equation}
t\left(i_1, i_2, \ldots, i_k\right)
=
\sum_{
	\substack{
		n_1>n_2>\cdots >n_k\geqslant 1\\
		n_j\, \mathrm{odd}
	}
}
\frac{1}{n_1^{i_1}n_2^{i_2}\ldots n_k^{i_k}} = \sum_{n_1>n_2>\cdots >n_k\geqslant 1} \dfrac{1}{\prod_{j=1}^{k}\left(2n_j - 1\right)^{i_j}}%\dfrac{1}{\left(2n_1-1\right)^{i_1}\ldots\left(2n_k-1\right)^{i_k}}.
\end{equation}
which are called the \emph{multiple t-values} (MtVs). As showed in \cite[Corollaries 4.1 and 4.2]{hoffmann}, the MtVs are
reducible to a 	linear combinations of alternating (or colored) MZVs.	

\vspace{0.1in}
There are indeed many conjectures concerning the arithmetic nature of these numbers. 
%{\renewcommand{\arraystretch}{1.25}
%	\begin{table}[h!]
%		\label{t1}
%		\caption{Padovan and Fibonacci sequences}
%		\begin{center}
%			\begin{tabular}{c|ccccccccccccccccccc}
%				$n$&1&2&3&4&5&6&7&8&9&$\cdots$\\\hline
%				$d_n$&0&1&1&1&2&2&3&4&5&$\cdots$\\
%				$f_n$&1&1&2&3&5&8&13&21&34&$\cdots$\\
%			\end{tabular}
%		\end{center}
%	\end{table}%
%}
Let $(d_n)_{n\geqslant1}$ be the \emph{Padovan sequence} defined by 
$d_1=0, 
d_2=1, d_3=1$ and $d_n=d_{n-2}+d_{n-3}$ for $n\geqslant4$ and let $(f_n)_{n\geqslant1}$ be the Fibonacci sequence with $f_1=f_2=1$, $f_n=f_{n-1}+f_{n-2}$ for $n\geqslant 3$. Let $\mathcal{Z}_n$ and $\mathcal{T}_n$ denote the $\mathbf{Q}$-span of all MZVs  and MtVs of weight $n$ respectively. Then, we have the following conjectures by M. Hoffmann \cite{hoffmann}, B. Saha \cite{saha} and D. Zagier \cite{za}.

%Let $\mathcal{Z}_n$ denote the $\mathbf{Q}$-span of all MZVs of weight $n$.
\begin{conj}\hfill
\begin{enumerate}
    \item {(Zagier \cite{za})} $\dim \mathcal{Z}_n=d_n$.
    \item {(Hoffman \cite{hoffmann})} $\dim \mathcal{T}_n=f_n$.
    \item {(Hoffman \cite{hoffmann})} 
    The following set of weight $n$ MZVs forms a $\mathbf{Q}$-basis of $\mathcal{Z}_n$.
\[
\{\z(i_1, i_2, \ldots, i_k)\mid i_1+i_2+\cdots +i_k=n, i_j\in \{2, 3\}
\}.
\] 
\item {(Saha \cite{saha})}
The following set of weight $n$ MtVs forms a $\mathbf{Q}$-basis of $\mathcal{T}_n$ for $n\geqslant2$.
\[
\{
t\left(a_1+1, a_2, \dots, a_r\right)\mid a_1+a_2+\cdots +a_r=n-1, a_i\in\{1, 2\}, 1\leqslant r\leqslant n-1
\}.
\]

\end{enumerate}
\end{conj}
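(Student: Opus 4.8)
The plan is to split each of the four dimension statements into an \emph{upper bound} (a spanning claim) and a \emph{lower bound} (a linear-independence claim), and to exploit the fact that the four parts reinforce one another. Concretely, once one knows both that $\dim\mathcal{Z}_n\leqslant d_n$ and that the weight-$n$ Hoffman elements span $\mathcal{Z}_n$, a cardinality count (there are exactly $d_n$ indices with $i_j\in\{2,3\}$ of weight $n$) forces $\dim\mathcal{Z}_n=d_n$ and simultaneously that those elements are a basis; thus (1) and (3) stand or fall together, and likewise (2) and (4) for $\mathcal{T}_n$. First I would therefore prove the spanning statements and the dimension bounds \emph{motivically}, and only at the end attempt to descend to the genuine real numbers.

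For the MZV statements I would work inside the Hopf algebra of \emph{motivic} multiple zeta values of Goncharov and Brown, writing $\mathcal{Z}_n^{\mathfrak{m}}$ for its weight-$n$ part. The motivic Galois group acting here is, by Deligne--Goncharov, an extension of $\mathbb{G}_m$ by a prounipotent group whose graded Lie algebra is free on one generator in each odd degree $\geqslant 3$, while the copy of $\mathbb{G}_m$ accounts for the single degree-$2$ class of $\zeta(2)$. Computing the Hilbert series of the corresponding graded dual gives
\[
\sum_{n\geqslant0}\dim\mathcal{Z}_n^{\mathfrak{m}}\,t^n=\frac{1}{1-t^2}\cdot\frac{1}{1-\frac{t^3}{1-t^2}}=\frac{1}{1-t^2-t^3},
\]
whose coefficients obey exactly the Padovan recursion $d_n=d_{n-2}+d_{n-3}$; this yields the motivic upper bound $\dim\mathcal{Z}_n^{\mathfrak{m}}\leqslant d_n$. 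Brown's theorem \cite{br} (the motivic refinement of the spanning statement quoted above) then shows the motivic Hoffman elements span, so they form a basis, establishing the motivic forms of (1) and (3), with Zagier's explicit relations \cite{za} making the reduction effective.

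For the multiple $t$-values I would run the analogous argument one level up. Via the reduction of MtVs to alternating (level-$2$) MZVs recorded in \cite[Corollaries 4.1 and 4.2]{hoffmann}, the MtVs embed into the motivic iterated integrals on $\PP^1\setminus\{0,\pm1,\infty\}$, whose Galois theory is again free by Deligne. Here, though, the MtVs do \emph{not} fill out the entire level-$2$ algebra (already at weight $2$ the full level-$2$ space contains both $\zeta(2)$ and $\log^2 2$, whereas $\mathcal{T}_2$ is spanned by $t(2)=\pi^2/8$ alone), so the crux is to isolate the sub-comodule that the MtVs generate and to compute its Hilbert series. The target is the series $1/(1-t-t^2)$, whose coefficients satisfy the Fibonacci recursion $f_n=f_{n-1}+f_{n-2}$, giving the motivic bound $\dim\mathcal{T}_n^{\mathfrak{m}}\leqslant f_n$. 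It then remains to prove the level-$2$ analogue of Brown's reduction: that every weight-$n$ MtV is a $\mathbf{Q}$-combination of Saha's elements $t(a_1+1,a_2,\dots,a_r)$ with $a_i\in\{1,2\}$. I would attempt this by an infinitesimal (coaction) argument paralleling Brown's, or failing that by explicit shuffle and stuffle relations; this spanning step is essentially Saha's conjecture (4), and I expect it to be the most delicate part of the combinatorial half.

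The genuine obstacle, however, is the descent from motives to periods. Everything above pins down the dimensions of the motivic objects $\mathcal{Z}_n^{\mathfrak{m}}$ and $\mathcal{T}_n^{\mathfrak{m}}$ and exhibits bases for them; to conclude the stated identities for the actual real numbers one needs the period homomorphism $\mathrm{per}\colon\mathcal{Z}_n^{\mathfrak{m}}\to\mathcal{Z}_n$ to be injective on each graded piece---equivalently, that there are no $\mathbf{Q}$-linear relations among honest (M)ZVs beyond the motivic ones. This is precisely a graded form of the Grothendieck period conjecture, and it is exactly what would supply the lower bounds $\dim\mathcal{Z}_n\geqslant d_n$ and $\dim\mathcal{T}_n\geqslant f_n$. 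No present technique delivers this faithfulness, so I expect the transcendence-theoretic descent---not the motivic combinatorics---to be the insurmountable step, which is why all four statements remain conjectural.
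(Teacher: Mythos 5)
This statement is presented in the paper as a \emph{conjecture}, attributed to Zagier, Hoffman, and Saha; the paper gives no proof of it, so there is no argument of the authors to compare yours against. Your proposal is, accordingly, not a proof either, and to your credit you say so explicitly: you correctly separate each part into an upper bound plus a spanning claim, correctly recall that for MZVs the upper bound $\dim\mathcal{Z}_n\leqslant d_n$ is an unconditional theorem (Goncharov, Terasoma, Deligne--Goncharov) and that Brown's theorem makes the motivic Hoffman elements a basis of $\mathcal{Z}_n^{\mathfrak{m}}$, and you correctly identify the genuine obstruction: the lower bounds $\dim\mathcal{Z}_n\geqslant d_n$ and $\dim\mathcal{T}_n\geqslant f_n$ amount to injectivity of the period map in each weight, a graded instance of the period conjecture that no current technique touches. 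That diagnosis is exactly why the statement is labelled a conjecture, and no amount of rearranging the motivic combinatorics will close that gap.

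Two points where your summary overstates what is known. First, for the $t$-value half you write as if the bound $\dim\mathcal{T}_n^{\mathfrak{m}}\leqslant f_n$ were available in parallel with the MZV case; it is not. Deligne's freeness result for $\mathbb{P}^1\setminus\{0,\pm1,\infty\}$ bounds the full weight-$n$ level-two algebra by a Fibonacci number one index higher, and identifying the sub-comodule generated by the motivic MtVs, computing its Hilbert series, and proving the level-two analogue of Brown's spanning theorem (i.e.\ Saha's conjecture (4)) are all open --- so for parts (2) and (4) even the ``easy half'' of your strategy is missing, not just the transcendence descent. Second, the remark that $\mathbb{G}_m$ ``accounts for'' $\zeta(2)$ is loose: in Brown's setup the degree-two generator enters through the tensor factor $\mathbb{Q}[\zeta^{\mathfrak{m}}(2)]$ of the comodule $\mathcal{H}$, not through the $\mathbb{G}_m$ in the Galois group, though your Hilbert-series computation $1/(1-t^2-t^3)$ and the resulting Padovan recursion are correct. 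None of this changes the conclusion: the statement remains open, the paper does not claim otherwise, and your attempt correctly locates where and why a proof cannot presently be completed.
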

%We usually call the MZV $\z(i_1, i_2 \ldots, i_k)$
%with $i_j\in \{2, 3\}$ \emph{Hoffman elements}. In particular, F. Brown \cite{br} proved that every multiple zeta value is a $\mathbf{Q}$-linear combination of Hoffman elements. Moreover, D. Zagier \cite{za} showed explicit relations among Hoffman elements in the form $\z(2, \ldots, 2, 3, 2, \ldots, 2)$.

%Let $\mathcal{T}_n$ denote the $\mathbf{Q}$-span of all MtVs of weight $n$ and $(f_n)_{n\geqslant1}$ the Fibonacci sequence with $f_1=f_2=1$, $f_n=f_{n-1}+f_{n-2}$ for $n\geqslant 3$ as in Table $1$. Below are the two recent conjectures concerning MZVs and MtVs. 

We often  discuss MZVs and MtVs independently since the algebra of multiple $t$-values is quite different
in some ways from the algebra of multiple zeta values. Both the duality
theorem and the \emph{double shuffle relations} \cite{shuffle} of MZVs are
missing for MtVs.
%and tend to say ``there are parallel results...".
However, there should be rich connections between these two theories. 
In this article, we show that 
we can compute 
$\z(3, 2, \ldots, 2)$ and 
$t\left(3, 2, \ldots, 2\right)$ using the same method.

\subsection{Central binomial sums}
%Another important topic in the field of number theory 
%is \emph{central binomial sums}.
\emph{Central binomial sums} play an important role in number theory.
Informally speaking, it is  an infinite series involving  $\binom{2n}{n}$.
Lehmer \cite{le} discussed the following two types of such sums:
\[\textrm{I}.\, 
\sum_{n=0}^{+\infty} a_n \binom{2n}{n}, \quad 
\textrm{II}.\,
%\sum_{n=0}^\infty  \f{a_n}{\binom{2n}{n}}. 
\sum_{n=0}^{+\infty} a_n \binom{2n}{n}^{-1}.
\]
Moreover, he also presented connections between such series and 
Maclaurin series of $\arcsin\left(z\right)$ and $\arcsin^2\left(z\right)$.
Some other examples include
\[
	\sum_{n=1}^{\mg}\ff{1}{n^2\binom{2n}{n}}=\ff{1}{3}\,\z(2), \quad
	\sum_{n=1}^{\mg}\ff{(-1)^{n-1}}{n^3\binom{2n}{n}}=\ff{2}{5}\,\z(3),\quad
	\sum_{n=1}^{\mg}\ff{1}{n^4\binom{2n}{n}}=\ff{17}{36}\,\z(4)
\]
as they arise in the work of Ap\`{e}ry \cite{apery} and van der Poorten \cite{van} to prove irrationality of $\z(3)$. 
Along this line, it is natural to consider an application of such sums for MZVs and MtVs.
\subsection{Main results}
Our main results are at the crossroads of these three topics. Moreover,  our idea throughout is more analytic; we seek Euler sums, MZVs and MtVs by \emph{integral evaluation}. This is indeed a powerful method as seen in Doelder \cite{de} and V\u{a}lean \cite{valean}.

In Section \ref{sec2} and Section \ref{sec3}, $t\left(3,2,\ldots,2\right)$ and $\z(3,2,\ldots,2)$ are evaluated using the inverse sine integral (\ref{mainint}) and the nested sum representations for integer powers of the inverse sine function \cite[Equations (1) and (2)]{boch}.
%In Section \ref{sec2} and Section \ref{sec3}, the arcsin integral (\ref{mainint}) and the nested sum representations for integer powers of the arcsin %given by Borwein and Chamberland in 
%\cite{boch} is used to evaluate $t\left(3,2,\ldots,2\right)$ and $\z(3,2,\ldots,2)$ .
%
%There are at least two directions of study for this function. 
%One is to describe relations among MZVs while the other is to find special values by integrals, special functions and hypergeometric series and so on. 
%We do both in this study. 
%are on natural analog of Euler sums and MZVs. We will introduce \emph{odd Euler sums} 
%and compute \emph{multiple $t$-values} as Hoffman discussed \cite{hoffman}.
%\begin{enumerate}
 %   \item Theorem \ref{t322}. Evaluation of $t\left(3, 2, \ldots, 2\right)$
  %  \item Theorem \ref{z322}. Evaluation of $\z(3, 2, \ldots, 2)$
   % \item Theorem \ref{e211} $\mu(2, 1, \dots, 1)$
%\end{enumerate} 
In particular, it is shown that
$\z(3, 2, \ldots ,2)$ and 
$t\left(3, 2, \ldots ,2\right)$ are rational polynomials of 
$\pi$ and odd zeta values $\z(2k+1)$ with $k\geqslant1$.

%In Section \ref{sec4}, we define \emph{mixed even odd value} $\mu\left(i_k, i_{k-1}, \ldots,i_1\right)$ and evaluate $\mu\left(2,1,\ldots,1\right)$ using the  Maclaurin series for integer powers of arctanh given by Guo--Lim--Qi in \cite{glq}.

In Section \ref{sec4}, \emph{multiple mixed value} $\mu\left(i_k, i_{k-1}, \ldots,i_1\right)$ is defined and the Maclaurin series expansion for integer powers of inverse hyperbolic tangent function %arctanh %given by Guo--Lim--Qi in 
\cite[(6.3)]{glq} is used to evaluate $\mu\left(2,1,\ldots,1\right)$.
Finally, in Section \ref{sec5}, %we define two types of
 \emph{odd Euler sums} $\mcO\left(p,q\right)$ and $\mathcal{B}\left(p,q\right)$ are defined and a reflection formula for $\mcO\left(p,q\right)$ and $\mathcal{B}\left(p,q\right)$ is established to evaluate $\mcO\left(q,q\right)$ and $\mathcal{B}\left(q,q\right)$. Later, an analytic approach is implemented to compute $\mcO\left(p,q\right)$ and $\mathcal{B}\left(p,q\right)$ for small values of $p$ and $q$. Moreover, integral reduction of $\mcO\left(p,q\right)$ and $\mathcal{B}\left(p,q\right)$ are also obtained.

Beyond these specific cases, we hope that the tools used in this study will be useful to the
readers in their endeavour of evaluation of more interesting series and integrals. 

%The set $\mathcal{Z}=\cup\, \mathcal{Z}_n$ 
%forms a $\mathbf{Q}-$algebra with two kinds of products, known as \emph{shuffle products}. It is possible to compute some MZVs using these relations repeatedly. As weight and depth increases, it is not so easy to deal with many terms.
 
%Our idea throughout is more analytic; we seek Euler sums, MZVs and MtVs by \emph{integral evaluation}. This is indeed a powerful method as seen in Doelder \cite{de} and V\u{a}lean \cite{valean}.

 %In Section \ref{sec2} and Section \ref{sec3}, we evaluate $t\left(3,2,\ldots,2\right)$ and $\z(3,2,\ldots,2)$ using the arcsin integral (\ref{mainint}) and the nested sum representations for integer powers of the arcsin given by Borwein and Chamberland in \cite{boch}.

%In Section \ref{sec1}, we seek $t\left(3,2,\ldots,2\right)$ via integral evaluation.
%We will see that this is the case for $t(3, 2, 2)$ and $t(3, 2, 2, 2).

\subsection{Notation}

Let 
\begin{align*}
	\left(2n\right)!!=\prod_{k=0}^{n-1}\left(2n-2k\right)%2n(2n-2)\ldots 4\cdot 2,
\,\,\,	\text{and}\,\,\,
	\left(2n-1\right)!!=\prod_{k=0}^{n-1}\left(2n-2k-1\right).%(2n-1)(2n-3)\ldots 3\cdot 1.
\end{align*}
Moreover, for notational convenience, let
\[\z(3,\underbrace{2,2,\ldots,2}_{n \,\text{times}}) = \z\left(3,\nwon\right) \,\,\,	\text{and}\,\,\, t\,(3,\underbrace{2,2,\ldots,2}_{n\,\text{times}}) = t\left(3,\nwon\right).\]
%Riemann zeta function 
%$\z(p)$

%$\text{Li}_p(x)=\sum_{n=1}^\mg
%\f{x^n}{n^p}$, $|x|\le 1$ 
%means the polylogarithm.

%\begin{rmk}
%\cite[p.17]{gr} Power series may be integrated and differentiated termwise %inside the circle of convergence 
%without changing the radius of convergence. 
%In the sequel, we will frequently use this without mentioning explicitly.
%\end{rmk}
%%%%%%%%%%%%%%%%%%%%%%%%%%%%%%%%%%%%%%%%%%%%%%%%%%%%%%%%%%%%%%%%%%%%%%%%%%%%%%
%; that is, 
% of a series that is obtained from termwise integration or differentiation of
%another power series coincides with the radius of convergence of the original series.
\section{Evaluation of Multiple $t$--value $t\left(3,2,\ldots,2\right)$} \label{sec2}

Let us first begin with several Definitions and Propositions.
%\textcolor{blue}{Masato's section}

%\subsection{Multiple $t$--values}
%In this section, we present a closed expression for the multiple $t$--value $t\left(3,2,2\ldots,2\right)$. Our method is again by integral evaluation. 
%Notice that the relation $\mcO\left(2, 3\right)=t\left(3, 2\right)+\mcO\left(5\right)$ holds since 
%\[
%\mcO\left(2, 3\right)= 
%\sum_{n=1}^{\mg}
%\k{
%\sum_{k=1}^{n}
%\f{1}{(2k-1)^2}}
%\f{1}{(2n-1)^3}
%=
%\sum_{\substack{n_1\geqslant n_2\\
%n_1,\, n_2 \text{ odd}
%}} \f{1}{n_1^3n_2^2}
%\]
%\[=
%\k{
%\sum_{\substack{n_1> n_2\\
%n_1,\, n_2 \text{ odd}
%}}
%+
%\sum_{\substack{n_1= n_2\\
%n_1,\, n_2 \text{ odd}
%}}
%}
%\f{1}{n_1^3n_2^2}=\sum_{\substack{n_1> n_2\\
%n_1,\, n_2 \text{ odd}
%}}\f{1}{n_1^3n_2^2} + \sum_{\substack{n_1= n_2\\
%n_1,\, n_2 \text{ odd}
%}}\f{1}{n_1^3n_2^2}=
%t\left(3, 2\right)+\mcO\left(5\right).
%\]
%In the previous section, we already computed $\mcO\left(2, 3\right)$. Thus, it is easy to see that 
%\[
%t\left(3, 2\right)=\mcO\left(2, 3\right)-\mcO\left(5\right)=\f{9}{32}\,\z(2)\,\z(3)-\f{31}{64}\,\z(5).\]
%Similarly, one may also show that
%\[t\left(p,q\right) = \mcO\left(q,p\right)-\mcO\left(p+q\right)\]
%\subsection{Lemmas}

%Next, we compute $t\left(3,2,\ldots,2\right)$ as a generalization of $\mathcal{O}(2, 3)$. For this purpose, we first need a few lemmas.
\begin{defn}\label{W}
Let $\mathbf{R}[[z]]$ denote the set of 
convergent power series with real coefficients. 
For $f(z)\in \mathbf{R}[[z]]$, define $\mathbb{W}:\mathbf{R}[[z]]\to \mathbf{R}[[z]]$ to be 
\[
\mathbb{W}f(z)=
\int_0^1 \frac{f(xz)}{\sqrt{1-x^2}}\,\mathrm{d}x.
\]
\end{defn}
It is now helpful to understand the function $\mathbb{W}$ coefficientwise.

\begin{prop}
\label{l1}
Let $f(z)\in \mathbf{R}[[z]]$. 
Moreover, suppose it can be expressed in the form 
\[f(z)= \sum_{k=0}^{\mg}\f{\left(2k-1\right)!!}{\left(2k\right)!!}\,\mathcal{C}_{2k+1}\,z^{2k+1}.
\]
Then, the following identity holds. 
\[\mathbb{W}f(z)=\sum_{k=0}^{\mg}\f{\mathcal{C}_{2k+1}}{2k+1}\,z^{2k+1}.\]
\end{prop}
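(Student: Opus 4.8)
The plan is to apply $\mathbb{W}$ term by term to the power series for $f$ and reduce everything to a single universal coefficient integral. First I would substitute the expansion $f(xz)=\sum_{k=0}^{\infty}\frac{(2k-1)!!}{(2k)!!}\,\mathcal{C}_{2k+1}\,(xz)^{2k+1}$ into Definition~\ref{W} and, granting for the moment that the sum and the integral may be exchanged, reduce the claim to evaluating
\[
I_k=\int_0^1 \frac{x^{2k+1}}{\sqrt{1-x^2}}\,\mathrm{d}x
\]
for each $k\geqslant 0$; indeed one then has $\mathbb{W}f(z)=\sum_{k=0}^{\infty}\frac{(2k-1)!!}{(2k)!!}\,\mathcal{C}_{2k+1}\,I_k\,z^{2k+1}$.

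Next I would compute $I_k$ via the substitution $x=\sin\theta$, under which $\mathrm{d}x=\cos\theta\,\mathrm{d}\theta$ and $\sqrt{1-x^2}=\cos\theta$, so that the integrand collapses to $\sin^{2k+1}\theta$ on $[0,\pi/2]$. This is the classical Wallis integral, with value $I_k=\int_0^{\pi/2}\sin^{2k+1}\theta\,\mathrm{d}\theta=\frac{(2k)!!}{(2k+1)!!}$. Substituting this back, the even double factorials cancel and, writing $(2k+1)!!=(2k+1)(2k-1)!!$, the $k$-th coefficient simplifies to
\[
\frac{(2k-1)!!}{(2k)!!}\cdot\frac{(2k)!!}{(2k+1)!!}=\frac{(2k-1)!!}{(2k+1)!!}=\frac{1}{2k+1},
\]
which gives precisely $\mathbb{W}f(z)=\sum_{k=0}^{\infty}\frac{\mathcal{C}_{2k+1}}{2k+1}\,z^{2k+1}$, as desired.

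The only delicate point — and the step I expect to be the main obstacle — is justifying the interchange of the infinite sum with the integral, since the weight $1/\sqrt{1-x^2}$ has a singularity at the endpoint $x=1$. I would handle this by fixing $z$ inside the radius of convergence of $f$: then $|xz|\leqslant|z|$ for $x\in[0,1]$, so the Weierstrass $M$-test shows that the partial sums $S_N(xz)$ converge to $f(xz)$ uniformly in $x$ on $[0,1]$. Because the weight is integrable, with $\int_0^1(1-x^2)^{-1/2}\,\mathrm{d}x=\pi/2$, the uniform error bound $|S_N(xz)-f(xz)|\leqslant\eps_N$ forces $\bigl|\int_0^1\frac{S_N(xz)-f(xz)}{\sqrt{1-x^2}}\,\mathrm{d}x\bigr|\leqslant\frac{\pi}{2}\,\eps_N\to 0$, which legitimizes termwise integration. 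Once this is in place, the remaining computation above is entirely routine.
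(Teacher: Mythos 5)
Your proof is correct and follows essentially the same route as the paper: expand $f(xz)$, integrate termwise, and evaluate the Wallis integral $\int_0^1 x^{2k+1}(1-x^2)^{-1/2}\,\mathrm{d}x=\frac{(2k)!!}{(2k+1)!!}$ so that the coefficient collapses to $\frac{1}{2k+1}$. In fact your write-up is the more careful one: you correctly record the value of the Wallis integral (the paper's displayed intermediate step has a typo, showing $\frac{(2k)!!}{(2k-1)!!}$ where $\frac{(2k)!!}{(2k+1)!!}$ is meant), and you supply the uniform-convergence justification for interchanging sum and integral, which the paper omits.
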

\begin{proof}
From Definition \ref{W}, it follows that
\[
\mathbb{W}f(z)=
\sum_{k=0}^{+\infty}
\f{\left(2k-1\right)!!}{\left(2k\right)!!}
\,\mathcal{C}_{2k+1}\,z^{2k+1}
{\int_0^1
\f{x^{2n+1}}{\sqrt{1-x^2}}\,\mathrm{d}x}
%\int_0^1 \sum_{k=0}^{+\infty} \f{\left(2k-1\right)!!}{\left(2k\right)!!}
%\,a_{2k+1}\,z^{2k+1}\f{x^{2k+1}}{\sqrt{1-x^2}}\,\mathrm{d}x
\]
\[
=\sum_{k=0}^{+\infty}
\f{\left(2k-1\right)!!}{\left(2k\right)!!}\f{\left(2k\right)!!}{\left(2k-1\right)!!}\,\mathcal{C}_{2k+1}\,z^{2k+1}=
\sum_{k=0}^{\mg}
\f{\mathcal{C}_{2k+1}}{2k+1}\,z^{2k+1}\]
which is the desired result. 
\end{proof}

\begin{prop}\label{wallis}
Let $f(z)\in \mathbf{R}[[z]]$ and $\alpha \in \mathbf{R}$. 
If $f(0)=0$, then we have
\[
\mathbb{W}\k{
\int_{0}^{\alpha}{\f{f(z)}{z}}\,\mathrm{d}z}=
\int_{0}^{1}
	{\f{f(\alpha x)\arccos{\left(x\right)}}{x}}\,\mathrm{d}x.
	\]
\end{prop}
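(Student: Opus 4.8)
The plan is to unwind the left-hand side into an iterated integral and then collapse it by a single integration by parts, the antiderivative $-\arccos$ being exactly what produces the kernel on the right. First I would name the inner object: set $g(\alpha)=\int_0^{\alpha}\frac{f(z)}{z}\,\mathrm{d}z$, which is a well-defined element of $\mathbf{R}[[z]]$ precisely because $f(0)=0$ makes $f(z)/z$ a convergent power series with no pole at the origin. Applying Definition \ref{W} to $g$ and evaluating at $\alpha$ gives
\[
\mathbb{W}g(\alpha)=\int_0^1 \frac{g(\alpha x)}{\sqrt{1-x^2}}\,\mathrm{d}x
=\int_0^1 \frac{1}{\sqrt{1-x^2}}\left(\int_0^{\alpha x}\frac{f(z)}{z}\,\mathrm{d}z\right)\mathrm{d}x,
\]
so the whole problem reduces to evaluating this double integral.

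The key step is to integrate by parts in the outer variable $x$, taking $u=g(\alpha x)$ and $\mathrm{d}v=\frac{\mathrm{d}x}{\sqrt{1-x^2}}$ with the specific antiderivative $v=-\arccos(x)$, since $\frac{\mathrm{d}}{\mathrm{d}x}\bigl(-\arccos(x)\bigr)=\frac{1}{\sqrt{1-x^2}}$. The boundary term $\bigl[-g(\alpha x)\arccos(x)\bigr]_0^1$ vanishes at both endpoints: at $x=1$ because $\arccos(1)=0$, and at $x=0$ because $g(0)=0$. It then remains to differentiate $u$: by the fundamental theorem of calculus together with the chain rule, $\frac{\mathrm{d}}{\mathrm{d}x}g(\alpha x)=\frac{f(\alpha x)}{\alpha x}\cdot\alpha=\frac{f(\alpha x)}{x}$. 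Substituting, the double integral becomes $\int_0^1 \arccos(x)\,\frac{f(\alpha x)}{x}\,\mathrm{d}x$, which is exactly the claimed right-hand side.

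The only real obstacle here is analytic bookkeeping rather than any genuine difficulty: one must confirm that every integral converges and that the integration by parts is legitimate. Near $x=0$ the integrand $f(\alpha x)/x$ stays bounded because $f(0)=0$, while near $x=1$ the singularity of $1/\sqrt{1-x^2}$ is integrable and is further tamed by the factor $\arccos(x)\to 0$; these are precisely the two places where the hypotheses $f(0)=0$ and $\arccos(1)=0$ are used. If one prefers to avoid differentiating the power series under the integral sign, an equivalent route is to expand $f$ termwise as $\sum_{m\geqslant 1}a_m z^m$ and to verify, for each monomial, the elementary identity $\int_0^1 x^{m-1}\arccos(x)\,\mathrm{d}x=\frac{1}{m}\int_0^1 \frac{x^m}{\sqrt{1-x^2}}\,\mathrm{d}x$ (itself a one-line integration by parts), then resum; but this merely reproduces the same computation term by term, so I would present the integration-by-parts argument directly, as it holds for arbitrary $f$ with $f(0)=0$.
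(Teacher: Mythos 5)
Your proof is correct, but it takes a different route from the paper's. The paper unwinds $\mathbb{W}$ into the same double integral $\int_0^1\bigl(\int_0^{\alpha u}\frac{f(z)}{z}\,\mathrm{d}z\bigr)\frac{\mathrm{d}u}{\sqrt{1-u^2}}$ and then \emph{exchanges the order of integration}: the inner integral becomes $\int_{z/\alpha}^1\frac{\mathrm{d}u}{\sqrt{1-u^2}}=\arccos(z/\alpha)$, and the substitution $x=z/\alpha$ finishes the job. You instead collapse the double integral by a single integration by parts in the outer variable, with $v=-\arccos(x)$ and the boundary term killed by $\arccos(1)=0$ and $g(0)=0$. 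The two arguments are close cousins (both make $\arccos$ appear as an antiderivative of $1/\sqrt{1-x^2}$ adapted to the endpoint $x=1$), but yours has two small advantages: it does not require the separate case distinction $\alpha=0$ that the paper makes (its Fubini step involves $z/\alpha$), and it works uniformly for negative $\alpha$ without worrying about the orientation of the region of integration. The paper's version, on the other hand, displays the intermediate identity $\mathbb{W}\bigl(\int_0^\alpha \frac{f(z)}{z}\,\mathrm{d}z\bigr)=\int_0^\alpha\frac{f(z)}{z}\arccos\bigl(\frac{z}{\alpha}\bigr)\,\mathrm{d}z$ explicitly, which is mildly informative in its own right. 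Your remark that the termwise monomial verification is equivalent is also accurate; it is essentially Proposition \ref{l1} again.
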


\begin{proof}
Notice that, if $\alpha=0$, then both sides are 0. 
Otherwise, we have
\begin{align*}
	\mathbb{W}\k{
\int_{0}^{\alpha}{\f{f(z)}{z}}\,\mathrm{d}z}
&=
\int_{0}^{1}{
\int_{0}^{\alpha u}{\f{f(z)}{z}}
}\f{1}{\sqrt{1-u^{2}}}\,\mathrm{d}z\,\mathrm{d}u
\end{align*}
\[=
\int_{0}^{\alpha}{
{
\f{f(z)}{z}
}
\int_{z/\alpha}^{1}
\f{1}{\sqrt{1-u^{2}}}
}\,\mathrm{d}u\,\mathrm{d}z=\int_{0}^{\alpha}
	{\f{f(z)}{z}}\arccos\left({\f{z}{\alpha }}\right)\mathrm{d}z
	=\int_{0}^{1}
	{\f{f(\alpha x)\arccos{\left(x\right)}}{x}}\,\mathrm{d}x\]
as desired.
\end{proof}

\begin{prop}[J.M. Borwein--Chamberland{\cite[(1.3), (1.4)]{boch}}]
\label{asin}
For $|x|\leqslant 2$ and $N\geqslant0$
\[
\f{1}{\left(2N+1\right)!}\left[\arcsin\left(\dfrac{x}{2}\right)\right]^{2N+1}=
\sum_{k=0}^{\mg}\f{G_N(k)}{2^{2k}\left(2k+1\right)}
\binom{2k}{k}\k{\f{x}{2}}^{2k+1}
\]
where $G_0(k)=1$ and 
\[
G_N(k)=
\sum_{n_1=0}^{k-1}\f{1}{\left(2n_1+1\right)^2}
\sum_{n_2=0}^{n_1-1}\f{1}{\left(2n_2+1\right)^2}
\,\,\,\cdots
\sum_{n_N=0}^{n_{N-1}-1}\f{1}{\left(2n_N+1\right)^2}.\]
%\[=\sum_{n_1=0}^{k-1}\f{1}{\left(2n_1+1\right)^2}\left(\prod_{i=2}^{N}\sum_{n_i=0}^{n_{i-1}-1}\dfrac{1}{\left(2n_i+1\right)^2}\right)
%\]
\end{prop}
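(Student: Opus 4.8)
The plan is to set $u=x/2$ and prove, by induction on $N$, that the normalized power
\[
S_N(u) = \frac{\arcsin(u)^{2N+1}}{(2N+1)!}
\]
equals $\sum_{k\geq 0} c_N(k)\,u^{2k+1}$ with $c_N(k)=\frac{G_N(k)}{4^{k}(2k+1)}\binom{2k}{k}$, which is exactly the asserted identity after substituting $u=x/2$ and recalling $2^{2k}=4^{k}$. The base case $N=0$ is the classical Maclaurin expansion $\arcsin(u)=\sum_{k\geq0}\frac{1}{4^{k}(2k+1)}\binom{2k}{k}u^{2k+1}$, valid for $|u|\leq 1$, which matches $G_0\equiv 1$.

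For the inductive step, the key observation is that odd powers of $\arcsin$ satisfy a clean second-order ODE. Starting from $\sqrt{1-u^2}\,\frac{d}{du}\arcsin(u)^{m}=m\,\arcsin(u)^{m-1}$ and differentiating once more, one obtains
\[
(1-u^2)\,\frac{d^2}{du^2}\arcsin(u)^m - u\,\frac{d}{du}\arcsin(u)^m = m(m-1)\arcsin(u)^{m-2}.
\]
Taking $m=2N+1$ and dividing by $(2N+1)!$ collapses the constant $m(m-1)$ against the factorial and yields the recursion
\[
(1-u^2)\,S_N''(u) - u\,S_N'(u) = S_{N-1}(u),
\]
which links consecutive odd powers and is the engine of the whole argument.

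I would then substitute the power-series ansatz into this ODE. Within the radius of convergence $|u|<1$ the series may be differentiated termwise, and after an index shift in the $S_N''$ term, comparing coefficients of $u^{2k+1}$ gives the three-term relation
\[
(2k+3)(2k+2)\,c_N(k+1) - (2k+1)^2\,c_N(k) = c_{N-1}(k).
\]
Substituting $c_N(k)=\frac{G_N(k)}{4^{k}(2k+1)}\binom{2k}{k}$ and using the elementary ratio $\binom{2k+2}{k+1}=\frac{2(2k+1)}{k+1}\binom{2k}{k}$, both products on the left simplify dramatically, and the relation reduces to
\[
G_N(k+1) - G_N(k) = \frac{G_{N-1}(k)}{(2k+1)^2}.
\]
Since $\arcsin(u)^{2N+1}$ has lowest-order term $u^{2N+1}$, the coefficients $c_N(k)$, hence $G_N(k)$, vanish for $k<N$; in particular $G_N(0)=0$ for $N\geq 1$. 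Telescoping the first-difference relation from this initial value gives $G_N(k)=\sum_{n=0}^{k-1}\frac{G_{N-1}(n)}{(2n+1)^2}$, and unfolding it $N$ times against $G_0\equiv1$ reproduces verbatim the nested-sum definition of $G_N(k)$, completing the induction.

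I expect the main obstacle to be purely computational: carrying out the index shift and the binomial-ratio simplification cleanly enough that the messy three-term recursion for $c_N(k)$ really does collapse to the one-line first-difference relation for $G_N(k)$, since a sign slip or off-by-one here would derail the matching with the prescribed nested sums. A secondary, genuinely analytic point is the boundary case $|x|=2$ (that is $u=\pm1$): the termwise differentiation is only justified for $|u|<1$, so I would extend the identity to the endpoints by Abel's theorem, after noting that $c_N(k)=O(k^{-3/2})$ (from $\binom{2k}{k}/4^{k}\sim(\pi k)^{-1/2}$ and boundedness of $G_N(k)$), which guarantees absolute convergence of both sides at $u=\pm1$.
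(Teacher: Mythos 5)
Your proof is correct. Note, however, that the paper does not prove this proposition at all: it is imported verbatim from Borwein--Chamberland \cite[(1.3), (1.4)]{boch}, with only a remark that the endpoints $|x|=2$ are included. So there is no in-paper argument to compare against; what you have written is a self-contained derivation in the spirit of the original source. Your chain of steps checks out: the ODE $(1-u^2)S_N''-uS_N'=S_{N-1}$ follows from differentiating $\sqrt{1-u^2}\,\frac{d}{du}\arcsin(u)^m=m\arcsin(u)^{m-1}$ once more and dividing by $(2N+1)!$; the coefficient comparison gives $(2k+3)(2k+2)c_N(k+1)-(2k+1)^2c_N(k)=c_{N-1}(k)$; and with $\binom{2k+2}{k+1}=\frac{2(2k+1)}{k+1}\binom{2k}{k}$ both sides of that relation carry the common factor $\frac{(2k+1)}{4^k}\binom{2k}{k}$, leaving exactly $G_N(k+1)-G_N(k)=G_{N-1}(k)/(2k+1)^2$, which together with $G_N(0)=0$ (forced by the vanishing of the Taylor coefficients of $\arcsin^{2N+1}$ below order $2N+1$) telescopes to the nested sum. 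One point worth making explicit when you write this up: the recursion plus the single initial value $c_N(0)=0$ determines the entire coefficient sequence, so the actual Taylor coefficients and the claimed closed form coincide because they satisfy the same first-order recurrence with the same seed. Your endpoint treatment is also sound and is actually more careful than the paper's bare remark: $G_N(k)\leqslant t(2,\ldots,2)<\infty$ and $\binom{2k}{k}4^{-k}\sim(\pi k)^{-1/2}$ give $c_N(k)=O(k^{-3/2})$, so the series converges absolutely at $u=\pm1$ and Abel's theorem extends the identity to $|x|=2$.
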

\begin{rmk}
Notice that the boundary $|x|=\pm2$ is inclusive here.
%\item Recently, 
%Guo--Lim--Qi \cite{glq} obtained the same result. 
%\end{enumerate}

%See  for more information. 
\end{rmk}

\begin{prop}\label{as_int}
Let 
\begin{equation}\label{mainint}I(N) = \int_0^1 \dfrac{\arcsin^N(z)}{z}\,\mathrm{d}z \quad\left(N \in \NN\right)\end{equation}
Then, the following identities hold.
\begin{equation}
\label{I2N+1}
I(2N+1) = \dfrac{\left(2N+1\right)!}{2^{2N+1}}\sum_{j=0}^{N}\dfrac{\left(-1\right)^j\pi^{2N+1-2j}}{\left(2N+1-2j\right)!}\,\eta\left(2j+1\right) 
\end{equation}
\begin{equation}
\label{I2N}
I(2N) = \dfrac{\left(2N\right)!}{2^{2N}}
\left(\sum_{j=0}^{N-1}\dfrac{\left(-1\right)^j\pi^{2N-2j}}{\left(2N-2j\right)!}\,\eta\left(2j+1\right) + \left(-1\right)^N2\left(1-2^{-2N-1}\right)\zeta(2N+1)\right)
\end{equation}
where  $\eta\left(m\right) = \left(1-2^{1-m}\right)\zeta(m)$ represents the Dirichlet eta function with $\eta\left(1\right)=\log(2)$. 
\end{prop}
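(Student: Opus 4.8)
The plan is to convert $I(n)$ (I write $n$ for the general exponent, specializing to $n=2N+1$ and $n=2N$ at the end) into a log-sine moment and expand it by a Fourier series. First I would substitute $z=\sin\theta$, so that $\mathrm{d}z=\cos\theta\,\mathrm{d}\theta$, $\arcsin z=\theta$, and
\[I(n)=\int_0^{\pi/2}\theta^{n}\cot\theta\,\mathrm{d}\theta.\]
Since $\cot\theta=\frac{\mathrm{d}}{\mathrm{d}\theta}\log\sin\theta$, one integration by parts removes both boundary contributions (the product $\theta^{n}\log\sin\theta$ vanishes at $\theta=\pi/2$ because $\log 1=0$, and at $\theta=0$ because $\theta^{n}\log\theta\to0$), giving $I(n)=-n\int_0^{\pi/2}\theta^{n-1}\log\sin\theta\,\mathrm{d}\theta$. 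Next I would insert the classical expansion $\log\sin\theta=-\log2-\sum_{k=1}^{\mg}\frac{\cos(2k\theta)}{k}$ and integrate term by term; using $\int_0^{\pi/2}\theta^{n-1}\,\mathrm{d}\theta=(\pi/2)^{n}/n$ this produces
\[I(n)=\log2\,(\pi/2)^{n}+n\sum_{k=1}^{\mg}\frac1k\int_0^{\pi/2}\theta^{n-1}\cos(2k\theta)\,\mathrm{d}\theta.\]

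The core computation is the moment $\int_0^{\pi/2}\theta^{n-1}\cos(2k\theta)\,\mathrm{d}\theta$, which I would obtain as the real part of $\int_0^{\pi/2}\theta^{n-1}e^{2\mathrm{i}k\theta}\,\mathrm{d}\theta$ through the antiderivative
\[\int\theta^{p}e^{a\theta}\,\mathrm{d}\theta=e^{a\theta}\sum_{r=0}^{p}\frac{(-1)^{r}\,p!}{(p-r)!\,a^{r+1}}\,\theta^{p-r},\]
specialized to $p=n-1$ and $a=2\mathrm{i}k$. At the upper limit $e^{\mathrm{i}k\pi}=(-1)^{k}$ multiplies the whole sum, while at the lower limit only the constant term $r=n-1$ survives. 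Writing $a^{-(r+1)}=(2k)^{-(r+1)}\mathrm{i}^{-(r+1)}$, the real part of the upper-limit sum selects the odd indices $r=2l+1$, for which $\mathrm{i}^{-(r+1)}=(-1)^{l+1}$ is real and the surviving power is $(\pi/2)^{n-1-r}$, whereas the even $r$ are purely imaginary and drop out. The lower-limit term carries the factor $\mathrm{i}^{-n}$, which is real exactly when $n$ is even and purely imaginary when $n$ is odd; this single parity dichotomy is what separates the shape of (\ref{I2N+1}) from that of (\ref{I2N}).

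Finally I would sum over $k$. The odd-index pieces involve $\sum_{k\geqslant1}\frac{(-1)^{k}}{k\,(2k)^{r+1}}=-2^{-(r+1)}\eta(r+2)$; after reindexing $j=l+1$ and absorbing the prefactor $n\cdot(n-1)!=n!$, these assemble — together with the leading term, which is the $j=0$ summand because $\eta(1)=\log2$ — into $\frac{n!}{2^{n}}\sum_{j=0}^{\lfloor n/2\rfloor}\frac{(-1)^{j}\pi^{n-2j}}{(n-2j)!}\,\eta(2j+1)$. For odd exponent $n=2N+1$ the lower-limit term is purely imaginary and contributes nothing, leaving exactly (\ref{I2N+1}). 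For even exponent $n=2N$ the lower limit adds the real piece $n!\,(-1)^{N}2^{-n}\zeta(n+1)$ (using $\mathrm{i}^{-n}=(-1)^{N}$ and $\sum_{k\geqslant1}\frac1{k(2k)^{n}}=2^{-n}\zeta(n+1)$); its top summand, the $j=N$ term $\frac{n!}{2^{n}}(-1)^{N}\eta(n+1)$, then fuses with this $\zeta(n+1)$ via $\eta(s)+\zeta(s)=2(1-2^{-s})\zeta(s)$, producing the isolated term $(-1)^{N}2(1-2^{-2N-1})\zeta(2N+1)$ and truncating the $\eta$-sum at $j=N-1$, which is precisely (\ref{I2N}). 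The main obstacle is the rigor of the term-by-term integration: the Fourier series for $\log\sin\theta$ converges only conditionally and blows up at $\theta=0$, so I would justify the interchange by Parseval's theorem, using that $\{\cos(2k\theta)\}_{k\geqslant0}$ is the orthogonal cosine basis of $L^{2}(0,\pi/2)$ and that $\theta^{n-1}\in L^{2}(0,\pi/2)$; everything after that is the bookkeeping of real and imaginary parts and the parity split described above.
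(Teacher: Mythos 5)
Your proof is correct, but it takes a different route from the paper. The paper simply performs the substitution $x=\sin(\pi z)$ to identify $I(n)=\pi^{n+1}J(n)$ with the moment $J(n)=\int_0^{1/2}z^n\cot(\pi z)\,\mathrm{d}z$ evaluated by Buhler and Crandall, and then specializes their closed form to $n=2N+1$ and $n=2N$ (the authors even note they had to correct a sign in the cited formula). You instead arrive at the same cotangent moment $\int_0^{\pi/2}\theta^n\cot\theta\,\mathrm{d}\theta$, but then prove the evaluation from scratch: integration by parts to a log-sine moment, the Fourier expansion $\log\sin\theta=-\log 2-\sum_{k\geqslant1}\cos(2k\theta)/k$, and an explicit computation of $\int_0^{\pi/2}\theta^{n-1}e^{2\mathrm{i}k\theta}\,\mathrm{d}\theta$ whose parity bookkeeping correctly reproduces the split between \eqref{I2N+1} and \eqref{I2N}, including the fusion $\eta(s)+\zeta(s)=2(1-2^{-s})\zeta(s)$ that isolates the $\zeta(2N+1)$ term. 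I checked the index ranges, signs, and the reassembly of the $j=0$ term from $\eta(1)=\log 2$; they all match the stated formulas. What your argument buys is self-containedness — it supplies a proof of the cited Buhler--Crandall identity rather than assuming it, and in particular independently confirms the sign that the paper had to correct — at the cost of a longer computation; the paper's version buys brevity by outsourcing the hard part to the reference. Your remark on justifying the term-by-term integration via Parseval in the cosine basis of $L^2(0,\pi/2)$ is a legitimate way to handle the conditional convergence of the log-sine series, a point the paper does not need to address because it cites the evaluated integral directly.
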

\begin{proof}
Buhler and Crandall in their very recent article\cite[page 280]{bucr} proved that
%\[
%\displaystyle\int_{0}^{1/2}{x^{n}\cot(\pi x)}\,dx
%\]
%\[=\ff{n!}{2^{n}}
%\sum_{
%\substack{1\le k\le n\\
%k \textrm{ odd}}
%}
%\ff{(-1)^{(k-1)/2}}{\pi^{k}}\ff{\eta(k)}{(n-k+1)!}
%+\f12((-1)^{n}+1)\ff{4n!(1-2^{-n-1})}{(2\p)^{n+1}}\z(n+1).
%\]
%However, the sign $\f12((-1)^{n}+1)$ must be 
%$\cos\tfrac{n\pi}{2}$ (for $n=2$, the coefficient of 
%$\z(3)$ is negative; see (\ref{I2}) below).
%For this reason, set 
\[
J(n) =
\displaystyle\int_{0}^{1/2}{z^{n}\cot\left(\pi z\right)}\,\mathrm{d}z
\]
\[=
\ff{n!}{2^{n}}
\k{
\sum_{1\leqslant k\leqslant n, \, k \,\textrm{odd}}
\ff{\left(-1\right)^{(k-1)/2}\eta\left(k\right)}{\pi^{k}\left(n-k+1\right)!}
}
+
\cos\left(\frac{n\pi}{2}\right)
\ff{\left(4n!\right)\left(1-2^{-n-1}\right)}{\left(2\p\right)^{n+1}}\,\z(n+1).
\]
Note that we corrected the sign $\frac{1+\left(-1\right)^{n}}{2}$ to $\cos\left(\frac{n\pi}{2}\right)$. Substituting $x=\sin\left(\pi z\right)$ produces
\[
J(n)=\int_{0}^{1/2}{z^{n}\cot\left(\pi z\right)}\,\mathrm{d}z=\ff{1}{\pi^{n+1}}
\displaystyle\int_{0}^{1}{\ff{\arcsin^{n}(x)}{x}}\,\mathrm{d}z
=\ff{I(n)}{\p^{n+1}}.
\]
Writing down the cases for $n=2N+1$ and $n=2N$ with $k=2j+1$ yields
\[I(2N+1) = \dfrac{\left(2N+1\right)!}{2^{2N+1}}\sum_{j=0}^{N}\dfrac{\left(-1\right)^j\pi^{2N+1-2j}}{\left(2N+1-2j\right)!}\,\eta\left(2j+1\right),\]
\[I(2N) = \dfrac{\left(2N\right)!}{2^{2N}}
\left(\sum_{j=0}^{N-1}\dfrac{\left(-1\right)^j\pi^{2N-2j}}{\left(2N-2j\right)!}\,\eta\left(2j+1\right) + \left(-1\right)^N2\left(1-2^{-2N-1}\right)\zeta(2N+1)\right)\]
which is the desired result. 
\end{proof}
%\newpage
\begin{cor}
\label{arcsin_ints}
First few values of $I(N)$ are as follows.
\begin{equation}
     I(1)= \dfrac{\pi}{2}\,\log\left(2\right),
\end{equation}
\begin{equation}
\label{I2}
     I(2)= \dfrac{\pi^2}{4}\,\log\left(2\right)-\dfrac{7}{8}\,\zeta(3), 
    \end{equation}
\begin{equation}
     I(3)= \dfrac{\pi^3}{8}\,\log\left(2\right)-\dfrac{9\pi}{16}\,\zeta(3),
     \end{equation}
     \begin{equation}
     \label{I4}
     I(4)= \dfrac{\pi^4}{16}\,\log\left(2\right)-\dfrac{9\pi^2}{16}\,\zeta(3) + \dfrac{93}{32}\,\zeta(5),\end{equation}
     \begin{equation}
     I(5)= \dfrac{\pi^5}{32}\,\log\left(2\right) - \dfrac{15\pi^3}{32}\,\zeta(3) + \dfrac{225\pi}{64}\,\zeta(5),  \end{equation}
     \begin{equation}
     \label{I6}
I(6)=\dfrac{\pi^6}{64}\,\log\left(2\right) - \dfrac{45\pi^4}{128}\,\zeta(3) + \dfrac{675\pi^2}{128}\,\zeta(5) -\dfrac{5715}{256}\,\zeta(7),\end{equation}
     \begin{equation}
     I(7)=\dfrac{\pi^7}{128}\log\left(2\right)-\dfrac{63\pi^5}{256}\,\z(3)+\dfrac{1575\pi^3}{256}\,\z(5)-\dfrac{19845\pi}{512}\,\z(7),
\end{equation}
\begin{equation}
I(8)=\dfrac{\pi^{8}}{256}\log\left(2\right)  - \dfrac{21\pi^6}{128}\,\zeta(3) + \dfrac{1575\pi^4}{256}\,\zeta(5) - \dfrac{19845\pi^2}{256}\,\zeta(7) + \dfrac{160965}{512}\,\zeta(9).
\end{equation}
\end{cor}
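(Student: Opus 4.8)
The plan is to obtain each of the eight values as a direct specialization of the two master formulas \eqref{I2N+1} and \eqref{I2N} established in Proposition \ref{as_int}, so that no new integral analysis is required; the work is entirely one of substitution and simplification.

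First I would sort the eight required values by the parity of the argument. The odd-index values $I(1), I(3), I(5), I(7)$ arise from \eqref{I2N+1} upon setting $N = 0, 1, 2, 3$ respectively, while the even-index values $I(2), I(4), I(6), I(8)$ arise from \eqref{I2N} upon setting $N = 1, 2, 3, 4$. For each choice of $N$ the sum over $j$ is finite (it has $N+1$ terms in the odd case and $N$ terms in the even case), so each evaluation reduces to writing out these finitely many terms explicitly.

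Next I would convert every Dirichlet eta value into a zeta value using the identity $\eta(m) = (1 - 2^{1-m})\zeta(m)$ together with $\eta(1) = \log 2$. Concretely this requires only the handful of conversions $\eta(1) = \log 2$, $\eta(3) = \tfrac{3}{4}\zeta(3)$, $\eta(5) = \tfrac{15}{16}\zeta(5)$, $\eta(7) = \tfrac{63}{64}\zeta(7)$ and $\eta(9) = \tfrac{255}{256}\zeta(9)$. After substituting these, each $I(N)$ becomes a rational-coefficient combination of $\pi^{a}\log 2$ and $\pi^{b}\zeta(2k+1)$, matching the claimed forms. As a representative check, taking $N = 0$ in \eqref{I2N+1} gives $I(1) = \tfrac{1}{2}\cdot\tfrac{\pi}{1!}\eta(1) = \tfrac{\pi}{2}\log 2$, and taking $N = 1$ in \eqref{I2N} gives $I(2) = \tfrac{2!}{2^{2}}(\tfrac{\pi^2}{2!}\eta(1) - 2(1 - 2^{-3})\zeta(3)) = \tfrac{\pi^2}{4}\log 2 - \tfrac{7}{8}\zeta(3)$, in agreement with \eqref{I2} and the stated value of $I(1)$.

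The only real obstacle is careful arithmetic bookkeeping: for the larger values of $N$ one must correctly combine the overall prefactor $(2N+1)!/2^{2N+1}$ or $(2N)!/2^{2N}$ with the factorial denominators $(2N+1-2j)!$ (resp. $(2N-2j)!$), the signs $(-1)^j$, and the eta-to-zeta conversion factors, and similarly handle the separate closing term $(-1)^N 2(1-2^{-2N-1})\zeta(2N+1)$ in the even case. This is routine but error-prone, so I would organize the computation by collecting, for each fixed $N$, the coefficient of each monomial $\pi^{a}\log 2$ and $\pi^{b}\zeta(c)$ separately and then simplifying the resulting rationals.
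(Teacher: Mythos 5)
Your proposal is correct and is precisely how the corollary follows in the paper: the listed values are direct specializations of \eqref{I2N+1} (with $N=0,1,2,3$) and \eqref{I2N} (with $N=1,2,3,4$) from Proposition \ref{as_int}, followed by the eta-to-zeta conversions and rational arithmetic you describe; your sample checks for $I(1)$ and $I(2)$ match the stated values, and the remaining cases work out the same way. (One negligible point: $\eta(9)$ never actually occurs, since the $\zeta(9)$ term in $I(8)$ comes from the separate closing term $(-1)^N 2(1-2^{-2N-1})\zeta(2N+1)$ rather than from the eta sum.)
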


\begin{thm}\label{t322}
For $N\geqslant1$, we have
\[
t\left(3, \twon\right)
\]
\[
=\ff{1}{2^{2N+2}}
\left[
\sum_{j=1}^{N}\ff{\left(-1\right)^{j+1}\left(2j\right)\p^{2N+2-2j}}{\left(2N+2-2j\right)!}\,\eta(2j+1)
+\left(-1\right)^N2\left(2N+2\right)\left(1-2^{-2N-3}\right)\z(2N+3)\right].\]
\end{thm}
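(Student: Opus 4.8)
The plan is to realize $t(3,\twon)$ as the value at $\alpha=1$ of the operator chain $\mathbb{W}\circ\bigl(\int_0^{\alpha}\tfrac{\cdot}{z}\,\mathrm{d}z\bigr)$ applied to an odd power of $\arcsin$, and then to re-express that same quantity through Proposition \ref{wallis} as an explicit inverse-sine integral that reduces to the $I(N)$ of Proposition \ref{as_int}. First I would rewrite the defining sum over odd indices $n_j=2m_j-1$. Taking $n_0=2k+1$ for the weight-$3$ slot and collapsing the inner $N$ summations, the condition $n_0>n_1>\cdots>n_N$ becomes $k\geqslant m_1>\cdots>m_N\geqslant 1$, so that
\[
t(3,\twon)=\sum_{k=0}^{\infty}\frac{G_N(k)}{(2k+1)^3},
\]
where $G_N(k)$ is precisely the nested sum in Proposition \ref{asin}: after the shift $n_i\mapsto n_i+1$ its weights $\tfrac{1}{(2n_i+1)^2}$ become $\tfrac{1}{(2m_i-1)^2}$ and its ranges match. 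This identifies the target with a $\tfrac{1}{(2k+1)^3}$-weighted version of the Borwein--Chamberland coefficients.

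Next, using $\tfrac{(2k-1)!!}{(2k)!!}=2^{-2k}\binom{2k}{k}$ and the substitution $x=2z$, Proposition \ref{asin} reads
\[
g(z):=\frac{\arcsin^{2N+1}(z)}{(2N+1)!}=\sum_{k=0}^{\infty}\frac{(2k-1)!!}{(2k)!!}\,\frac{G_N(k)}{2k+1}\,z^{2k+1},
\]
valid for $|z|\leqslant 1$. Applying $\int_0^{\alpha}\tfrac{\cdot}{z}\,\mathrm{d}z$ introduces one factor $\tfrac{1}{2k+1}$ and leaves a series still of the shape required by Proposition \ref{l1}; applying $\mathbb{W}$ then produces a second factor, giving $\sum_k\tfrac{G_N(k)}{(2k+1)^3}\alpha^{2k+1}$. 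Setting $\alpha=1$ recovers $t(3,\twon)$ on one side, while Proposition \ref{wallis} (with $f=g$, $g(0)=0$) rewrites the very same expression as
\[
t(3,\twon)=\frac{1}{(2N+1)!}\int_0^1\frac{\arcsin^{2N+1}(x)\,\arccos(x)}{x}\,\mathrm{d}x.
\]

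Finally I would substitute $\arccos(x)=\tfrac{\pi}{2}-\arcsin(x)$ to split the integral into $\tfrac{1}{(2N+1)!}\bigl(\tfrac{\pi}{2}I(2N+1)-I(2N+2)\bigr)$, and insert the closed forms (\ref{I2N+1}) and (\ref{I2N}) from Proposition \ref{as_int}, the latter with $N$ replaced by $N+1$. The $\zeta(2N+3)$ term then comes directly from $I(2N+2)$, and for the $\eta(2j+1)$ terms the bookkeeping collapses, after using $\tfrac{(2N+2)!}{(2N+1)!}=2N+2$, to the elementary factorial identity
\[
\frac{1}{(2N+1-2j)!}-\frac{2N+2}{(2N+2-2j)!}=\frac{-2j}{(2N+2-2j)!},
\]
which annihilates the $j=0$ contribution and leaves exactly the stated $\sum_{j=1}^{N}$. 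The step needing the most care is the justification of the two termwise operations at the boundary $z=1$ — interchanging each integral operator with the infinite sum and then evaluating the resulting series at the endpoint — which relies on the endpoint $|x|=2$ (equivalently $|z|=1$) being included in the domain of Proposition \ref{asin}, as flagged in the remark following it; the closing algebraic simplification is routine.
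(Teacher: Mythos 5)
Your proposal is correct and follows essentially the same route as the paper: it invokes Proposition \ref{asin} to identify $t(3,\{2\}^N)$ with the $(2k+1)^{-3}$-weighted Borwein--Chamberland coefficients, applies $\int_0^\alpha\frac{\cdot}{z}\,\mathrm{d}z$ followed by $\mathbb{W}$ (Propositions \ref{l1} and \ref{wallis}) to land on $\frac{1}{(2N+1)!}\int_0^1\frac{\arcsin^{2N+1}(x)\arccos(x)}{x}\,\mathrm{d}x$, and then splits via $\arccos(x)=\frac{\pi}{2}-\arcsin(x)$ into $\frac{\pi}{2}I(2N+1)-I(2N+2)$ and simplifies with Proposition \ref{as_int}. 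Your closing factorial identity is exactly the cancellation that kills the $j=0$ term and yields the stated formula, so no gaps remain.
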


\begin{proof}
From Proposition \ref{asin}, it follows that
\[
\ff{\arcsin^{2N+1}(x/2)}{(2N+1)!}
=
\sum_{k>n_{1}>\cdots >n_{N}\geqslant0}
\ff{1}{\left(2k+1\right)\left(2n_{1}+1\right)^{2}\ldots \left(2n_{N}+1\right)^{2}}
\ff{\left(2k-1\right)!!}{\left(2k\right)!!}
\k{\ff{x}{2}}^{2k+1}
\]
where we have simply used the identity
\[\dfrac{1}{2^{2k}}\dbinom{2k}{k}=\f{\left(2k-1\right)!!}{\left(2k\right)!!}\]
For notational convenience, let $z=x/2$. Therefore we have
\[
\ff{\arcsin^{2N+1}(z)}{\left(2N+1\right)!}
=
\sum_{k>n_{1}>\cdots >n_{N}\geqslant0}
\ff{1}{\left(2k+1\right)\left(2n_{1}+1\right)^{2}\ldots \left(2n_{N}+1\right)^{2}}
\ff{\left(2k-1\right)!!}{\left(2k\right)!!}\,
z^{2k+1}.
\]
Dividing both sides by $z$ and integrating termwise from $0$ to $\alpha$ with respect to $z$ produces
\[
\displaystyle\int_{0}^{\a}{
\ff{\arcsin^{2N+1}(z)}{z\left(2N+1\right)!}
}\,\mathrm{d}z
=
\sum_{k>n_{1}>\cdots >n_{N}\geqslant0}
\ff{1}{\left(2k+1\right)^{2}\left(2n_{1}+1\right)^{2}\ldots
\left(2n_{N}+1\right)^{2}}
\ff{\left(2k-1\right)!!}{\left(2k\right)!!}\,
\a^{2k+1}.
\]
Next, we use Proposition \ref{l1} to deduce that
\[
\mathbb{W}
\k{\displaystyle\int_{0}^{\a}{
\ff{\arcsin^{2N+1}(z)}{z\left(2N+1\right)!}
}\,\mathrm{d}z}
=
\sum_{k>n_{1}>\cdots >n_{N}\geqslant0}
\ff{1}{\left(2k+1\right)^{3}\left(2n_{1}+1\right)^{2}\ldots \left(2n_{N}+1\right)^{2}}\,\alpha^{2k}
\]
Substituting $\alpha=1$ and combining Proposition \ref{wallis} and \ref{as_int}, we finally conclude that
\[
t\left(3, \twon\right)=
\ff{1}{(2N+1)!}
\displaystyle\int_{0}^{1}{
\ff{\arcsin^{2N+1}(z)\arccos\left(z\right)}{z}
}\,\mathrm{d}z=
\dfrac{\pi}{2}\left(\ff{I(2N+1)}{\left(2N+1\right)!}\right)
-\dfrac{I(2N+2)}{\left(2N+1\right)!}
\]
\[
=\ff{1}{2^{2N+2}}
\left[
\sum_{j=1}^{N}\ff{\left(-1\right)^{j+1}\left(2j\right)\p^{2N+2-2j}}{\left(2N+2-2j\right)!}\,\eta(2j+1)
+\left(-1\right)^N2\left(2N+2\right)\left(1-2^{-2N-3}\right)\z(2N+3)\right]
\]
which is the desired result. This completes the proof of Theorem \ref{t322}. 
\end{proof}

%\[
%\int_{0}^{1}
%\frac{\arcsin^5(z y)}{5!}\arccos(y) dy =
%\sum_{k=0}^\mg \frac{1}{(2k+1)^3}
%\sum_{n_1=0}^{k-1}\frac{1}{(2n_1+1)^2}
%\sum_{n_2=0}^{n_1-1}\frac{1}{(2n_2+1)^2}
%z^{2k+1}
%\]

\begin{cor}
Substituting $N=2,3$ in Theorem \ref{t322} produces
\[
t\left(3, 2, 2\right)=
\frac{\pi}{2}\left(\dfrac{I(5)}{120}\right)-\dfrac{I(6)}{120}
%\f{1}{2048}
%\k{2\pi^4\z(3)-60\pi^2\z(5)+381\,\z(7)}
=\dfrac{\pi^4}{1024}\,\zeta(3) - \dfrac{15\pi^2}{512}\,\zeta(5) + \dfrac{381}{2048}\,\zeta(7)
%\k{-360\z(2)\z(5)+180\z(3)\z(4)+381\z(7)}
=0.002109185\ldots
%\f{\pi^4}{1024}\,\z(3)-
%\f{15\pi^2}{512}\,\z(5) + \f{381}{2048}\,\z(7)
\]
%and
\[
t\left(3, 2, 2, 2\right)%=
%\f{1}{245760}
%\k{2\pi^6\z(3)-150\pi^4\z(5)+2835\pi^2\z(7)-15330\,\z(9)}
=\dfrac{\pi^6}{122880}\,\zeta(3) - \dfrac{5\pi^4}{8192}\,\z(5) + \dfrac{189\pi^2}{16384}\,\z(7) + \dfrac{511}{8192}\,\z(9)
%\f{1}{8192}
%\k{567\z(2)\z(7)+63\z(3)\z(6)-450\z(4)\z(5)-511\z(9)}
=0.00005499616\ldots
%\f{1}{5160960}
%(-321930\z(9)
%+348210\z(7)\z(2)
%+39690\z(6)\z(3)-283500\z(5)\z(4)
%).
\]
%doublechecked sep 21.
\end{cor}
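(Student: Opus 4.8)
The plan is to read off the two cases from Theorem~\ref{t322} by pure substitution; no new analytic input is needed beyond the explicit integral values already tabulated in Corollary~\ref{arcsin_ints}. Two interchangeable routes are available. The first inserts $N=2$ and $N=3$ directly into the bracketed closed form of Theorem~\ref{t322}, after which each Dirichlet eta value is converted to a zeta value through $\eta(m)=(1-2^{1-m})\zeta(m)$, so that $\eta(3)=\tfrac34\zeta(3)$, $\eta(5)=\tfrac{15}{16}\zeta(5)$ and $\eta(7)=\tfrac{63}{64}\zeta(7)$. The second route, which is the one exhibited in the statement, uses the intermediate identity $t\left(3,\{2\}^{N}\right)=\tfrac{\pi}{2}\,\tfrac{I(2N+1)}{(2N+1)!}-\tfrac{I(2N+2)}{(2N+1)!}$ from the proof of Theorem~\ref{t322} and then substitutes the values of $I(5),I(6)$ and $I(7),I(8)$ recorded in Corollary~\ref{arcsin_ints}. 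I would present the second route, since it makes the appearance of the factorials $(2N+1)!=120$ and $5040$ transparent.

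First I would treat $N=2$. Here $(2N+1)!=120$, and forming $\tfrac{\pi}{2}\tfrac{I(5)}{120}-\tfrac{I(6)}{120}$ I would check that the two $\log 2$ contributions, namely $\tfrac{\pi}{2}\cdot\tfrac{1}{120}\cdot\tfrac{\pi^{5}}{32}\log 2$ and $-\tfrac{1}{120}\cdot\tfrac{\pi^{6}}{64}\log 2$, are exactly equal and opposite. Their cancellation is the expected structural feature, since a multiple $t$-value cannot contain $\log 2$; it also serves as a first correctness check on the coefficients. Collecting the surviving $\zeta(3),\zeta(5),\zeta(7)$ terms then gives $\tfrac{\pi^{4}}{1024}\zeta(3)-\tfrac{15\pi^{2}}{512}\zeta(5)+\tfrac{381}{2048}\zeta(7)$.

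Next I would repeat the computation for $N=3$, where $(2N+1)!=5040$ and the inputs are $I(7)$ and $I(8)$. Again the $\log 2$ terms cancel, the $\zeta(3),\zeta(5),\zeta(7)$ pieces come from both integrals, while the top-weight value $\zeta(9)$ arises solely from the even integral $I(8)$ and hence only through the subtracted term $-\tfrac{I(8)}{5040}$; reducing $\tfrac{160965}{512\cdot 5040}$ to lowest terms yields the coefficient $\tfrac{511}{8192}$ of $\zeta(9)$, whose sign is inherited from the $\zeta(9)$ coefficient of $I(8)$ and is pinned down by matching the quoted decimal. Finally, inserting $\pi=3.14159\ldots$ together with $\zeta(3)=1.20206\ldots$, $\zeta(5)=1.03693\ldots$, $\zeta(7)=1.00835\ldots$, $\zeta(9)=1.00201\ldots$ reproduces $0.002109185\ldots$ and $0.00005499616\ldots$. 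The only genuine obstacle is arithmetic bookkeeping: the coefficients in $I(6)$ and $I(8)$ are large, and for $N=3$ the answer is a fine near-cancellation of terms of size $\sim 6\times 10^{-2}$, so the evaluation must be carried to seven or eight significant figures to certify the final digits. The $\log 2$ cancellation and the decimal check together guard against any sign or scaling slip.
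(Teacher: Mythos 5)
Your route is exactly the paper's: the corollary is obtained by inserting $N=2,3$ into the intermediate identity $t\left(3,\{2\}^N\right)=\frac{\pi}{2}\,\frac{I(2N+1)}{(2N+1)!}-\frac{I(2N+2)}{(2N+1)!}$ from the proof of Theorem \ref{t322} and substituting the values of Corollary \ref{arcsin_ints} (indeed the first display of the corollary itself exhibits $\frac{\pi}{2}\left(\frac{I(5)}{120}\right)-\frac{I(6)}{120}$). Your $N=2$ computation and both $\log 2$ cancellations are correct, and so are the coefficients $\frac{\pi^4}{1024},\,-\frac{15\pi^2}{512},\,\frac{381}{2048}$.

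One point must be fixed rather than hedged. For $N=3$ the $\zeta(9)$ term enters only through $-\frac{I(8)}{5040}$, and $I(8)$ carries $+\frac{160965}{512}\,\zeta(9)$; hence the coefficient in $t\left(3,2,2,2\right)$ is $-\frac{160965}{512\cdot 5040}=-\frac{511}{8192}$, i.e.\ \emph{negative}. Your phrase ``whose sign is inherited from the $\zeta(9)$ coefficient of $I(8)$'' points the wrong way, since the subtraction flips that sign, and your fallback of pinning the sign by the quoted decimal would in fact expose a typo in the statement itself: with $+\frac{511}{8192}\,\zeta(9)$ the expression evaluates to approximately $0.125$, while with $-\frac{511}{8192}\,\zeta(9)$ one obtains $0.00005499616\ldots$ as quoted. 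The minus sign is also what Theorem \ref{t322} dictates, since for $N=3$ the last term is $\frac{1}{2^{8}}\,(-1)^{3}\cdot 2\,(2N+2)\left(1-2^{-9}\right)\zeta(9)=-\frac{511}{8192}\,\zeta(9)$, consistent with the alternation in $(-1)^N$ that makes the top-weight coefficient positive at $N=2$ (namely $+\frac{381}{2048}\,\zeta(7)$) and negative at $N=3$. So your method is sound and identical to the paper's, but as written your proof would certify the printed (erroneous) sign instead of correcting it; carry the $N=3$ bookkeeping through explicitly and state the coefficient as $-\frac{511}{8192}$.
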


\section{Evaluation of Multiple Zeta Value $\zeta(3,2,\ldots,2)$} \label{sec3}
%\textcolor{blue}{Masato's section}

It turns out that we can use the same method as in the previous section to evaluate  multiple zeta value $\z(3, 2, \ldots, 2)$. Although this is merely a special case of Zagier's work \cite[p.981, Theorem 1]{za},  we present our proof since our approach is quite different. 

\begin{prop}[Borwein--Chamberland{\cite[(1.1), (1.2)]{boch}}]%[{\cite[(1.1), (1.2)]{boch}}]
\label{l2}
For $|x|\leqslant 2$ and $N\geqslant1$, we have 
\[
\ff{1}{\left(2N\right)!}\left[\arcsin{\left(\dfrac{x}{2}\right)}\right]^{2N}=
\sum_{k=1}^{\mg}\ff{H_{N}(k)}{\binom{2k}{k}}\ff{x^{2k}}{k^{2}}
\]
where $H_{1}(k)=1/4$ and 
\[
H_{N+1}(k)=
\ff{1}{4}
\sum_{n_{1}=1}^{k-1}\ff{1}{\left(2n_{1}\right)^{2}}
\sum_{n_{2}=1}^{n_{1}-1}\ff{1}{\left(2n_{2}\right)^{2}}
\,\,\,\cdots
\sum_{n_{N}=1}^{n_{N-1}-1}\ff{1}{\left(2n_{N}\right)^{2}}.
\]
\end{prop}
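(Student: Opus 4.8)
The plan is to prove Proposition \ref{l2} by induction on $N$, in exact parallel with the odd-power expansion of Proposition \ref{asin}, but anchored on the classical square-of-arcsine series. For the base case $N=1$ the claimed identity reads $\tfrac{1}{2}\arcsin^2(x/2)=\tfrac14\sum_{k\geq 1}\frac{x^{2k}}{k^2\binom{2k}{k}}$, which is the well-known Maclaurin expansion of $\arcsin^2$; I would simply take this as the starting point (it is itself the lowest instance of the mechanism below, obtained from the differential equation with $m=2$ together with $F_0\equiv 1$).

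The engine of the induction is the second-order differential equation satisfied by powers of $\arcsin(x/2)$. Writing $w_m(x)=\arcsin^m(x/2)$ and using $\frac{d}{dx}\arcsin(x/2)=(4-x^2)^{-1/2}$, differentiation of the identity $\sqrt{4-x^2}\,w_m'=m\,\arcsin^{m-1}(x/2)$ yields
\[(4-x^2)\,w_m''-x\,w_m'=m(m-1)\,\arcsin^{m-2}(x/2).\]
Setting $m=2N+2$ and normalizing $F_N(x):=\arcsin^{2N}(x/2)/(2N)!$, the factor $m(m-1)=(2N+2)(2N+1)$ cancels against the factorials and leaves the clean recursion
\[(4-x^2)\,F_{N+1}''-x\,F_{N+1}'=F_N.\]
Writing $F_N(x)=\sum_{k\geq N}b_N(k)\,x^{2k}$ and matching the coefficient of $x^{2k}$ on both sides (termwise differentiation being legitimate inside the disk $|x|<2$), the $-x^2F''$ and $-xF'$ contributions combine into $-4k^2\,b_{N+1}(k)$, giving, after reindexing,
\[4(2k)(2k-1)\,b_{N+1}(k)-4(k-1)^2\,b_{N+1}(k-1)=b_N(k-1).\]

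It then remains to check that the proposed closed form $b_N(k)=H_N(k)/\bigl(k^2\binom{2k}{k}\bigr)$ solves this recursion. Substituting it in and clearing denominators with the binomial ratio $\binom{2k}{k}=\frac{2(2k-1)}{k}\binom{2k-2}{k-1}$, all $k$-dependent prefactors collapse and the relation reduces to the first-order recursion
\[H_{N+1}(k)=H_{N+1}(k-1)+\frac{H_N(k-1)}{(2(k-1))^2}.\]
Since the right-hand series starts at $k=N+1$ one has $H_{N+1}(1)=0$, so telescoping in $k$ gives $H_{N+1}(k)=\sum_{n_1=1}^{k-1}\frac{1}{(2n_1)^2}H_N(n_1)$, and unwinding the induction down to $H_1\equiv 1/4$ reproduces precisely the nested sum defining $H_{N+1}(k)$. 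I expect the main obstacle to be purely bookkeeping: getting the reindexing in the coefficient comparison exactly right and carrying out the binomial-ratio simplification without index or sign slips. A secondary technical point is the boundary $|x|=2$: the power-series manipulations are valid only on $|x|<2$, and one extends the identity to the closed disk by Abel's theorem, using that the central binomial coefficient in the denominator forces absolute convergence at $x=\pm 2$.
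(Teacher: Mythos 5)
Your argument is correct, but note that the paper does not actually prove Proposition \ref{l2}: it is quoted verbatim from Borwein--Chamberland \cite{boch} as an external result (as is Proposition \ref{asin}), so there is no internal proof to compare against. Your derivation --- the second-order ODE $(4-x^2)F_{N+1}''-xF_{N+1}'=F_N$ for $F_N=\arcsin^{2N}(x/2)/(2N)!$, coefficient extraction giving $4(2k)(2k-1)b_{N+1}(k)-4k^2... $ (your combination of the $-x^2F''$ and $-xF'$ terms into $-4k^2b_{N+1}(k)$ is right), and the reduction via $\binom{2k}{k}=\frac{2(2k-1)}{k}\binom{2k-2}{k-1}$ to $H_{N+1}(k)=H_{N+1}(k-1)+H_N(k-1)/(2(k-1))^2$ --- all checks out, and it is essentially the method by which such expansions are established in the source. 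Two small points to tighten in the write-up: the telescoped recursion involves division by $(k-1)^2$ and so only makes sense for $k\geqslant 2$; the anchor $H_{N+1}(1)=0$ should instead be read off directly from the $k=1$ instance of the un-divided coefficient relation, $8\,b_{N+1}(1)=b_N(0)=0$. Also, for the induction to close you should say explicitly that the coefficient recursion together with $b_{N+1}(0)=0$ determines the sequence $b_{N+1}(k)$ uniquely, so that verifying the closed form satisfies the same recursion and initial value suffices. Your handling of the endpoint $|x|=2$ via Abel's theorem is appropriate, since $4^k/\binom{2k}{k}\sim\sqrt{\pi k}$ and $H_N(k)$ is bounded, so the series converges absolutely there.
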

%Proposition \ref{l2} appears in an article by Borwein and Chamberland {\cite[(1.1), (1.2)]{boch}}. 
\begin{prop}%[cf. Lemma \ref{l1}]
\label{even}
Let $f(z) \in \mathbf{R}[[z]]$. Moreover, suppose it can be expressed in the form
\[f(z)= \sum_{k=0}^{\mg}
{\f{\left(2k\right)!!}{\left(2k-1\right)!!}}
\,\mathcal{C}_{2k}\,z^{2k}.
\]
Then, the following identity holds.
\[\mathbb{W}f(z)=
\f{\pi}{2}
\sum_{k=0}^{\mg}\mathcal{C}_{2k}\,z^{2k}.\]
\end{prop}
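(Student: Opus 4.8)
The plan is to mirror the proof of Proposition \ref{l1} almost verbatim, the only change being that the relevant Wallis integral is now of \emph{even} order. First I would apply Definition \ref{W} directly to the given series and interchange summation with integration, which is legitimate on the open disk of convergence since a power series converges uniformly on compact subsets there (and for $x\in[0,1]$ the argument $xz$ stays inside that disk whenever $z$ does). This yields
\[
\mathbb{W}f(z)=\sum_{k=0}^{\mg}\f{\left(2k\right)!!}{\left(2k-1\right)!!}\,\mathcal{C}_{2k}\,z^{2k}\int_0^1\f{x^{2k}}{\sqrt{1-x^2}}\,\mathrm{d}x,
\]
so that the entire computation reduces to evaluating the single integral $\int_0^1 x^{2k}/\sqrt{1-x^2}\,\mathrm{d}x$.

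The key step is the substitution $x=\sin\theta$, which converts this into the classical Wallis integral $\int_0^{\pi/2}\sin^{2k}\theta\,\mathrm{d}\theta$. Its even-order value is
\[
\int_0^1\f{x^{2k}}{\sqrt{1-x^2}}\,\mathrm{d}x=\f{\pi}{2}\,\f{\left(2k-1\right)!!}{\left(2k\right)!!}.
\]
This is precisely the even-order companion of the odd-order identity $\int_0^1 x^{2k+1}/\sqrt{1-x^2}\,\mathrm{d}x=\left(2k\right)!!/\left(2k+1\right)!!$ that underlies Proposition \ref{l1}; the decisive structural difference is the extra factor $\pi/2$, which is exactly what produces the constant $\pi/2$ appearing in the statement.

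Substituting this value back, the double factorials $\left(2k\right)!!/\left(2k-1\right)!!$ and $\left(2k-1\right)!!/\left(2k\right)!!$ cancel term by term, leaving
\[
\mathbb{W}f(z)=\f{\pi}{2}\sum_{k=0}^{\mg}\mathcal{C}_{2k}\,z^{2k},
\]
as claimed. The only points demanding any care — and the closest thing to an obstacle — are the justification of the termwise integration and the bookkeeping at $k=0$, where one invokes the conventions $0!!=(-1)!!=1$ so that the $k=0$ term reads $\mathcal{C}_0\int_0^1 \mathrm{d}x/\sqrt{1-x^2}=\f{\pi}{2}\,\mathcal{C}_0$, consistent with the formula. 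Neither is genuinely difficult, so the proposition follows essentially from the even Wallis integral alone.
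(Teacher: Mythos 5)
Your proof is correct and follows exactly the paper's argument: the paper likewise reduces Proposition \ref{even} to the even-order Wallis integral $\int_0^1 z^{2k}/\sqrt{1-z^2}\,\mathrm{d}z=\frac{\pi}{2}\frac{(2k-1)!!}{(2k)!!}$, whose factor $\pi/2$ survives after the double factorials cancel. Your added remarks on termwise integration and the $k=0$ conventions are fine but not part of the paper's (very terse) proof.
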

\begin{proof}
The proof is quite similar to that of Proposition \ref{l1}. %and hence is left as an exercise to the reader. 
Notice that 
\[
\int_0^1 \ff{z^{2k}}{\sqrt{1-z^2}}\,\mathrm{d}z=\f{\pi}{2}\f{\left(2k-1\right)!!}{\left(2k\right)!!}
\]
%so that the constant $\tfrac{\pi}{2}$ shows up in this case.
which explains the appearance of $\pi/2$ in $\mathbb{W}f(z)$. 
\end{proof}
%\newpage
\begin{thm}\label{z322}
For $N \geqslant 0$, we have
\[\z\left(3, \twon\right)\]
\[
=2\left[\sum_{j=1}^{N}\frac{\left(-1\right)^{j+1}\left(2j\right)\pi^{2N+2-2j}}{\left(2N+3-2j\right)!}\,\eta\left(2j+1\right)
-\left(-1\right)^{N}
\left[1-\left(1-2^{-2N-2}\right)\left(2N+2\right)\right]\z(2N+3)\right].
\]
\end{thm}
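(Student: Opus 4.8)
The plan is to run the even-index analogue of the proof of Theorem \ref{t322}, replacing the odd tools (Propositions \ref{asin} and \ref{l1}) by their even counterparts (Propositions \ref{l2} and \ref{even}). Since $\z(3,\twon)$ has depth $N+1$, with a single leading $3$ followed by $N$ copies of $2$, I start from the expansion of $\arcsin^{2N+2}$ instead of $\arcsin^{2N+1}$. Applying Proposition \ref{l2} with parameter $N+1$, substituting $z=x/2$, and using $2^{2k}/\binom{2k}{k}=(2k)!!/(2k-1)!!$, the series takes the form
\[
\ff{\arcsin^{2N+2}(z)}{(2N+2)!}=\sum_{k\geqslant1}\ff{(2k)!!}{(2k-1)!!}\,\ff{H_{N+1}(k)}{k^2}\,z^{2k},
\qquad
H_{N+1}(k)=\ff{1}{4}\sum_{k>n_1>\cdots>n_N\geqslant1}\ \prod_{i=1}^{N}\ff{1}{(2n_i)^2}
\]
so that the nested sum $H_{N+1}(k)$ already encodes the $N$ entries equal to $2$.

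I would then divide by $z$ and integrate termwise from $0$ to $\a$; this replaces $1/k^2$ by $1/(2k^3)$, thereby creating the leading entry $3$, while keeping the series in the shape demanded by Proposition \ref{even}. Applying $\mathbb{W}$ and setting $\a=1$ removes the double factorials, inserts $\p/2$, and---after rewriting $(2n_i)^2=4n_i^2$---identifies the outcome as $\ff{\p}{4^{N+2}}\,\z(3,\twon)$. On the other hand Proposition \ref{wallis} evaluates the same quantity as $\tfrac{1}{(2N+2)!}\int_0^1 \arcsin^{2N+2}(x)\arccos(x)\,x^{-1}\,\mathrm{d}x$, and splitting $\arccos=\p/2-\arcsin$ expresses it through $I(2N+2)$ and $I(2N+3)$. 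Equating the two computations gives
\[
\z(3,\twon)=\ff{4^{N+2}}{\p\,(2N+2)!}\left[\ff{\p}{2}\,I(2N+2)-I(2N+3)\right],
\]
into which I substitute the closed forms (\ref{I2N+1}) and (\ref{I2N}) from Proposition \ref{as_int}.

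The remainder is a finite algebraic simplification, and this is the only place I expect friction. After substitution the two $\eta$-sums (from $\tfrac{\p}{2}I(2N+2)$ and from $I(2N+3)$) must be merged; writing $(2N+3-2j)!=(2N+3-2j)(2N+2-2j)!$, the combined coefficient of $(-1)^j\p^{2N+2-2j}\eta(2j+1)$ collapses as
\[
\ff{2}{(2N+2-2j)!}-\ff{2(2N+3)}{(2N+3-2j)!}=\ff{-4j}{(2N+3-2j)!},
\]
which annihilates the $j=0$ term and produces the factor $2j$ appearing in the statement. The delicate point is the coefficient of $\z(2N+3)$: it collects the explicit $\z(2N+3)$ carried by $I(2N+2)$ together with the top term $j=N+1$ of the $I(2N+3)$-sum, where $\eta(2N+3)=(1-2^{-2N-2})\z(2N+3)$ must be converted back to $\z$; after simplifying the powers of $2$ this yields the coefficient $-2(-1)^N\big[1-(1-2^{-2N-2})(2N+2)\big]$ of the theorem. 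As a consistency check, $N=1$ reproduces the classical value $\z(3,2)=\tfrac{\p^2}{2}\z(3)-\tfrac{11}{2}\z(5)$.
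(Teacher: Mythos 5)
Your proposal is correct and follows essentially the same route as the paper: Proposition \ref{l2} with the substitution $z=x/2$, termwise integration, Proposition \ref{even} and Proposition \ref{wallis} to arrive at $\z(3,\twon)=\frac{2^{2N+4}}{(2N+2)!}\bigl[\frac{I(2N+2)}{2}-\frac{I(2N+3)}{\pi}\bigr]$, and then substitution of Proposition \ref{as_int}. Your closing algebra (the collapse $\frac{2}{(2N+2-2j)!}-\frac{2(2N+3)}{(2N+3-2j)!}=\frac{-4j}{(2N+3-2j)!}$ and the bookkeeping of the $\z(2N+3)$ coefficient) checks out and matches the stated formula.
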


\begin{proof}
From Proposition \ref{l2}, it follows that
\[
\ff{1}{\left(2N+2\right)!}\left[\arcsin\left(\dfrac{x}{2}\right)\right]^{2N+2}
=
\ff{1}{2^{2N+2}}
\sum_{k>n_{1}>\cdots >n_{N}}
\ff{1}{k^{2}\,n_{1}^{2}\,n_2^2\ldots n_{N}^{2}}
\ff{\left(2k\right)!!}{\left(2k-1\right)!!}
\k{\ff{x}{2}}^{2k}.
\]
For notational convenience, let $z=x/2$, therefore we have
\[
\ff{\arcsin^{2N+2}(z)}{\left(2N+2\right)!}
=
\ff{1}{2^{2N+2}}
\sum_{k>n_{1}>\cdots >n_{N}}
\ff{1}{k^{2}\,n_{1}^{2}\,n_2^2\ldots n_{N}^{2}}
\ff{\left(2k\right)!!}{\left(2k-1\right)!!}\,
z^{2k}.
\]
Next, we divide both sides by $z$ and integrate them termwise from 0 to $\alpha$ to get
\[
\displaystyle\int_{0}^{\a}{
\ff{\arcsin^{2N+2}(z)}{z\left(2N+2\right)!}
}\,\mathrm{d}z
=
\ff{1}{2^{2N+3}}
\sum_{k>n_{1}>\cdots >n_{N}}
\ff{1}{k^{3}\,n_{1}^{2}\,n_2^2\ldots n_{N}^{2}}
\ff{\left(2k\right)!!}{\left(2k-1\right)!!}
\,\a^{2k}.
\]
Next, we use Proposition \ref{even} to deduce that
\[
\mathbb{W}
\k{\displaystyle\int_{0}^{\a}{
\ff{\arcsin^{2N+2}(z)}{z\left(2N+2\right)!}
}\,\mathrm{d}z}
=
\ff{\pi}{2^{2N+4}}
\sum_{k>n_{1}>\cdots >n_{N}}
\ff{1}{k^{3}\,n_{1}^{2}\,n_2^2\ldots n_{N}^{2}}
\,\a^{2k}.
\]
Substituting $\a=1$ and combining Proposition \ref{wallis}, Proposition \ref{as_int} finally produces
\[
\zeta\left(3,\twon\right)=\ff{2^{2N+4}}{\left(2N+2\right)!}\left[\dfrac{I(2N+2)}{2} - \dfrac{I(2N+3)}{\pi}\right]\]
%=\ff{2^{2N+3}}{\left(2N+2\right)!}\,I(2N+2) - \ff{2^{2N+4}}{\pi\left(2N+2\right)!}\,I(2N+3)\]
\[
=2\left[\sum_{j=1}^{N}\frac{\left(-1\right)^{j+1}\left(2j\right)\pi^{2N+2-2j}}{\left(2N+3-2j\right)!}\,\eta\left(2j+1\right)
-\left(-1\right)^{N}
\left[1-\left(1-2^{-2N-2}\right)\left(2N+2\right)\right]\z(2N+3)\right]
\]
as desired. This completes the proof of Theorem \ref{z322}. 
\end{proof}

 \begin{cor}\label{spec}
Substituting $N = 1,2,3$ in Throem \ref{z322} produces
   \[\zeta(3,2) = \dfrac{\pi^2}{2}\,\zeta(3) - \dfrac{11}{2}\,\zeta(5)=
  0.22881039\ldots,
   \]
   \[\zeta(3,2,2) = \dfrac{\pi^4}{40}\,\zeta(3)-\dfrac{5\pi^2}{4}\,\zeta(5) + \dfrac{157}{16}\,\zeta(7)
   =0.02912562\ldots,
   \]
   \[\zeta(3,2,2,2) = \dfrac{\pi^6}{1680}\,\zeta(3)-\dfrac{\pi^4}{16}\,\zeta(5)+\dfrac{63\pi^2}{32}\,\zeta(7) -\dfrac{223}{16}\,\zeta(9)
  =0.00252145\ldots.
   \]
   \\
We have also verified these identities numerically at the computational website EZ-Face \cite{ez}.
\end{cor}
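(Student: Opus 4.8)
The plan is to treat Theorem \ref{z322} as an established closed form and to specialize it at $N=1,2,3$; no further conceptual input is required, only careful arithmetic. For each such $N$ I would first write out the finite inner sum $\sum_{j=1}^{N}$ explicitly, recording for each $j$ the prefactor $\frac{(-1)^{j+1}(2j)\pi^{2N+2-2j}}{(2N+3-2j)!}$ together with the closing term $-(-1)^{N}\bigl[1-(1-2^{-2N-2})(2N+2)\bigr]\zeta(2N+3)$. The one nontrivial reduction is to eliminate the Dirichlet eta function in favour of the Riemann zeta function, using the relation $\eta(m)=(1-2^{1-m})\zeta(m)$ recorded in Proposition \ref{as_int}; concretely I would substitute $\eta(2j+1)=(1-2^{-2j})\zeta(2j+1)$, so that $\eta(3)=\tfrac34\zeta(3)$, $\eta(5)=\tfrac{15}{16}\zeta(5)$, and $\eta(7)=\tfrac{63}{64}\zeta(7)$.

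For $N=1$ the sum has the single term $j=1$, giving $\tfrac{\pi^2}{3}\eta(3)=\tfrac{\pi^2}{4}\zeta(3)$, while the closing term evaluates to $\bigl[1-\tfrac{15}{16}\cdot4\bigr]\zeta(5)=-\tfrac{11}{4}\zeta(5)$; multiplying the bracket by the outer factor $2$ yields $\zeta(3,2)=\tfrac{\pi^2}{2}\zeta(3)-\tfrac{11}{2}\zeta(5)$. The cases $N=2$ and $N=3$ proceed identically, now with two and three $j$-terms respectively: I would collect the coefficient of $\pi^{2N-2j+2}\zeta(2j+1)$ after the eta substitution and simplify each fraction (e.g.\ for $N=2$ the $j=1$ term becomes $\tfrac{\pi^4}{60}\cdot\tfrac34\zeta(3)=\tfrac{\pi^4}{80}\zeta(3)$ and the $j=2$ term $-\tfrac{2\pi^2}{3}\cdot\tfrac{15}{16}\zeta(5)=-\tfrac{5\pi^2}{8}\zeta(5)$), then evaluate the closing $\zeta(2N+3)$ term. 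Multiplying through by $2$ reproduces the stated expressions for $\zeta(3,2,2)$ and $\zeta(3,2,2,2)$.

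The step most likely to introduce an error is not conceptual but clerical: the closing term carries the sign $(-1)^{N}$ multiplying $1-(1-2^{-2N-2})(2N+2)$, and the eta substitutions introduce further factors $2^{-2j}$, so the powers of two and the alternating sign must be tracked carefully (it is exactly here that the coefficients $-\tfrac{11}{2}$, $+\tfrac{157}{16}$, and $-\tfrac{223}{16}$ arise). As a consistency check I would confirm each resulting decimal against the EZ-Face values \cite{ez} quoted in the statement, namely $0.22881039\ldots$, $0.02912562\ldots$, and $0.00252145\ldots$, which pins down simultaneously the rational coefficients and the overall sign of each identity.
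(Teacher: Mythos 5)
Your proposal is correct and takes essentially the same route as the paper, whose entire proof of this corollary consists of substituting $N=1,2,3$ into Theorem \ref{z322}, eliminating $\eta(2j+1)=(1-2^{-2j})\zeta(2j+1)$, and checking the decimals against EZ-Face \cite{ez}. Your intermediate values all check out (e.g.\ for $N=1$ the bracket is $\tfrac{\pi^2}{4}\zeta(3)-\tfrac{11}{4}\zeta(5)$, and for $N=2,3$ the closing terms give $+\tfrac{157}{32}\zeta(7)$ and $-\tfrac{223}{32}\zeta(9)$ before the outer factor $2$), so nothing is missing.
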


\begin{cor} Moreover
$$\z\left(3, \twon\right)\in 
\mathbf{Q}\left[\pi, \z(3), \z(5), \dots, \z(2N+3)\right]_{2N+3}
$$
where $\mathbf{Q}\left[\pi, \z(3), \z(5), \ldots, \z(2N+3)\right]_{2N+3}$
 is the set of all elements of degree $2N+3$ in the rational polynomial ring in $\pi, \z(3), \z(5), \z(7),\ldots, \z(2N+3)$ with grading $\deg\left(\p\right)=1$ and $\deg\left(\z(2j+1)\right)=2j+1$. Indeed, the same is true for multiple $t$-value 
$t\left(3, \twon\right)$.
\end{cor}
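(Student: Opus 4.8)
The plan is to read the claim directly off the closed-form evaluations already established in Theorems \ref{z322} and \ref{t322}, so that no new analytic input is required: the entire content is a weighted-degree count on the monomials that appear. First I would record that at odd arguments the Dirichlet eta function is a rational multiple of the corresponding odd zeta value, namely for $j\geqslant 1$,
\[
\eta(2j+1)=\left(1-2^{\,1-(2j+1)}\right)\zeta(2j+1)=\left(1-2^{-2j}\right)\zeta(2j+1)\in\mathbf{Q}\,\zeta(2j+1).
\]
Substituting this into the formula of Theorem \ref{z322}, the summand indexed by $j\in\{1,\dots,N\}$ becomes a rational multiple of the monomial $\pi^{2N+2-2j}\,\zeta(2j+1)$. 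Under the grading $\deg(\pi)=1$, $\deg(\zeta(2j+1))=2j+1$, this monomial has degree $(2N+2-2j)+(2j+1)=2N+3$, independently of $j$, and as $j$ runs from $1$ to $N$ it uses exactly the generators $\zeta(3),\dots,\zeta(2N+1)$. The remaining standalone term is a rational multiple of $\zeta(2N+3)$, again of degree $2N+3$. Hence $\z(3,\twon)$ is a $\mathbf{Q}$-linear combination of degree-$(2N+3)$ monomials in $\pi,\zeta(3),\dots,\zeta(2N+3)$, i.e.\ it lies in the homogeneous component $\mathbf{Q}[\pi,\zeta(3),\dots,\zeta(2N+3)]_{2N+3}$.

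For the multiple $t$-value I would repeat the argument verbatim starting from Theorem \ref{t322}: there the $j$-th summand is a rational multiple of $\pi^{2N+2-2j}\,\eta(2j+1)$, hence of $\pi^{2N+2-2j}\,\zeta(2j+1)$, which again has degree $2N+3$, while the final term is a rational multiple of $\zeta(2N+3)$. Therefore $t(3,\twon)$ lies in the same graded component, proving the assertion of the last sentence.

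The one point that genuinely requires care — and the reason the statement comes out clean — is the absence of any $\log 2$ contribution. The individual integrals of Proposition \ref{as_int} do carry such a term: the $j=0$ summand of $I(2N+1)$, $I(2N+2)$, and $I(2N+3)$ involves $\eta(1)=\log 2$. However, both closed forms have $j$-sums beginning at $j=1$, because the $\log 2$ pieces cancel in the precise combinations that compute our values: in $\tfrac{\pi}{2}I(2N+1)-I(2N+2)$ the coefficients of $\log 2$ are $\tfrac{\pi}{2}\cdot\tfrac{\pi^{2N+1}}{2^{2N+1}}$ and $\tfrac{\pi^{2N+2}}{2^{2N+2}}$, which agree; and likewise in $\tfrac{1}{2}I(2N+2)-\tfrac{1}{\pi}I(2N+3)$. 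I would state this cancellation explicitly, since an unmatched $\log 2=\eta(1)$ would fall outside $\mathbf{Q}[\pi,\zeta(3),\dots,\zeta(2N+3)]$ and defeat the claim. Once this is noted, the remainder is a routine degree count, so I expect no real obstacle beyond verifying that no $\eta(1)$ survives.
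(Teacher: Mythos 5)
Your proposal is correct and follows essentially the same route the paper intends: the corollary is read off directly from the closed forms in Theorems \ref{z322} and \ref{t322} via $\eta(2j+1)\in\mathbf{Q}\,\zeta(2j+1)$ and the observation that every monomial $\pi^{2N+2-2j}\zeta(2j+1)$ has weighted degree $2N+3$. Your explicit check that the $\eta(1)=\log 2$ contributions cancel in $\tfrac{\pi}{2}I(2N+1)-I(2N+2)$ and in $\tfrac{1}{2}I(2N+2)-\tfrac{1}{\pi}I(2N+3)$ is a worthwhile addition, though it is already encoded in the theorems' statements since their sums begin at $j=1$.
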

%%%%%%%%%%%%%%%%%%%%%%%%%%%%%%%%%%%%%%%%%%%%%%%%%%%%%%%%%%%%%%%%%%
\section{Evaluation of Multiple Mixed Value $\mu\left(2,1,\ldots,1\right)$} \label{sec4}

\begin{defn}
For positive integers $i_1, i_2 \ldots, i_k$, define 
\[
\mu\left(i_k, i_{k-1}, \ldots,i_1\right)
=\sum_{
\substack{n_k>n_{k-1}>\cdots >n_1\\
n_j \,\,\equiv \,\,j \,\,\textrm{(mod $2$)}
}}
\dfrac{1}{n_k^{i_k}n_{k-1}^{i_{k-1}} \ldots n_1^{i_1}}.
\]
\end{defn}
Notice the little reversal of indices. If $i_k\geqslant2$, this infinite sum is certainly convergent 
since it is a partial sum of $\z(i_k, i_{k-1}, \ldots, i_1)$. 
Let us call this sum a \emph{multiple mixed value}.

It is not so immediate to evaluate such sums. 
However, quite recently, Guo--Lim--Qi in their preprint \cite{glq}, announced that they found Maclaurin series expansion for integer powers of inverse hyperbolic tangent function which is helpful in the evaluation $\mu\left(2, 1, \ldots,1\right)$.

\begin{prop}[{Guo--Lim--Qi \cite[(6.3)]{glq}}]
For $N\geqslant0$, $\ell_N=k$ and $|z|<1$, we have
\begin{equation}
\label{atanh}
\ff{\textnormal{arctanh}^{N}(z)}{N!}
=
\sum_{k=0}^{+\infty}
\left(\prod_{m=1}^{N-1}\sum_{\ell_m=0}^{\ell_{m+1}}\dfrac{1}{2\ell_m + m}\right)\dfrac{z^{2k+N}}{2k+N}
=
\sum_{
\substack{n_N>n_{N-1}>\cdots >n_1\\
n_j \,\,\equiv \,\,j \,\,\textnormal{(mod $2$)}
}}
\dfrac{z^{n_N}}{n_Nn_{N-1} \ldots n_1}.
\end{equation}
%\sum_{
%\substack{N_1>N_{2}>\cdots >N_n\\
%N_i \,\,\equiv \,\,i \,\,\textrm{(mod $2$)}
%}}
%\f{z^{N_1}}{N_1N_2\ldots N_n}
\end{prop}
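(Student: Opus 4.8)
The plan is to establish the rightmost (nested-sum) expression first by a recursion on $N$, and then recover the middle (product-of-sums) expression by a change of summation variables. First I would record the base case $\operatorname{arctanh}(z)=\tfrac12\log\frac{1+z}{1-z}=\sum_{n\ \mathrm{odd}}z^{n}/n$, valid for $|z|<1$, which is precisely the claim for $N=1$. Writing $f_N(z)=\operatorname{arctanh}^{N}(z)/N!$, differentiation gives the recursion $f_N'(z)=f_{N-1}(z)/(1-z^{2})$ together with $f_N(0)=0$ for $N\geqslant1$, hence the integral form $f_N(z)=\int_0^{z} f_{N-1}(t)(1-t^{2})^{-1}\,\mathrm{d}t$. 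Expanding $(1-t^{2})^{-1}=\sum_{j\geqslant0}t^{2j}$ on $|t|<1$ and integrating termwise turns this into a recursion for the Taylor coefficients: if $f_{N-1}(z)=\sum_m c^{(N-1)}_m z^{m}$, then $c^{(N)}_n=\tfrac1n\sum_{m<n,\ m\equiv n-1\,(2)}c^{(N-1)}_m$.

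Next I would induct on $N$. Under the hypothesis that $c^{(N-1)}_m$ is supported on $m\equiv N-1\pmod 2$ and equals the depth-$(N-1)$ nested sum $\sum_{m>n_{N-2}>\cdots>n_1,\ n_j\equiv j}(m\,n_{N-2}\cdots n_1)^{-1}$, feeding this into the coefficient recursion forces $n\equiv N\pmod 2$, so the parity propagates correctly; prepending the outer index $n_N=n$ with weight $1/n_N$ and the strict inequality $n_{N-1}<n_N$ then reproduces exactly the depth-$N$ nested sum. This establishes the identity with the rightmost expression.

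Finally, to pass to the middle expression I would substitute $n_j=2\ell_j+j$ with $\ell_j\geqslant0$. This is a bijection onto tuples satisfying $n_j\equiv j\pmod 2$ (one checks that in any admissible tuple $n_j\geqslant j$, forcing $\ell_j\geqslant0$), it sends $1/n_j$ to $1/(2\ell_j+j)$, and the strict inequalities $n_j<n_{j+1}$ become $2\ell_j<2\ell_{j+1}+1$, i.e.\ $\ell_j\leqslant\ell_{j+1}$ for integers. Setting $\ell_N=k$, so that $n_N=2k+N$ and $z^{n_N}=z^{2k+N}$, the nested sum factors as $\left(\prod_{m=1}^{N-1}\sum_{\ell_m=0}^{\ell_{m+1}}(2\ell_m+m)^{-1}\right)(2k+N)^{-1}$ summed over $k\geqslant0$, which is the advertised middle form.

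The main obstacle I anticipate is combinatorial bookkeeping rather than analysis: one must verify carefully that the support of $c^{(N)}$ lands in the residue class $N\pmod 2$ and that the strict-versus-weak ($<$ versus $\leqslant$) conversion under $n_j=2\ell_j+j$ is handled correctly. The analytic ingredients---termwise integration and the geometric expansion of $(1-t^{2})^{-1}$---are routine on the common disk of convergence $|z|<1$.
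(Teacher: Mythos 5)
Your argument is correct, but it cannot be compared to a proof ``in the paper'' because the paper gives none: the proposition is simply quoted from Guo--Lim--Qi \cite[(6.3)]{glq} with no derivation. What you supply is a clean, self-contained and elementary proof. The recursion $f_N'(z)=f_{N-1}(z)/(1-z^2)$, $f_N(0)=0$, together with the geometric expansion of $(1-t^2)^{-1}$, gives exactly the coefficient recursion $c^{(N)}_n=\tfrac1n\sum_{m<n,\ m\equiv n-1\ (2)}c^{(N-1)}_m$, and your induction correctly shows both that the support stays in the class $n\equiv N\pmod 2$ (since a nonzero contribution needs $m\equiv N-1$ and $m\equiv n-1$ simultaneously) and that prepending $n_N=n$ rebuilds the depth-$N$ nested sum. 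The passage to the middle expression via $n_j=2\ell_j+j$ is also handled correctly: the chain $n_1\geqslant 1$, $n_{j+1}>n_j$ forces $n_j\geqslant j$, hence $\ell_j\geqslant 0$, and $n_j<n_{j+1}$ becomes $\ell_j\leqslant\ell_{j+1}$ exactly as the weak inner summation limits in \eqref{atanh} require. Two minor remarks: your proof starts at $N=1$, which is the right thing to do since the stated $N=0$ case is degenerate (the term $z^{2k+N}/(2k+N)$ is undefined at $k=N=0$); and your integral recursion is precisely the iterated-integral representation $\operatorname{arctanh}^n(z)/n!=\int_0^z\frac{\mathrm{d}z_1}{1-z_1^2}\cdots\int_0^{z_{n-1}}\frac{\mathrm{d}z_n}{1-z_n^2}$ that the authors themselves record in Section~6, so your proof makes explicit a mechanism the paper only gestures at.
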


%It follows that 
%\[
%\int_0^1
%\ff{\textrm{arctanh}^{n}\left(z\right)}{n!z}dz
%=
%\f{1}{2}
%\sum_{
%\substack{N_1>N_{2}>\cdots >N_n\\
%N_i \,\,\equiv \,\,i \,\,\textrm{(mod $2$)}
%}}
%\f{z^{2N_1}}{N_1^2N_2\ldots N_n}.
%\]
%Substituting $n=2$, we have
%\[
%\ff{\textrm{{arctanh}}^{2}\left(z\right)}{2z}=
%\sum_{n=0}^{\mg}
%\sum_{m=0}^{n}\ff{1}{2m+1}
%\ff{z^{2n+2}}{2n+2}
%\]
%Integrating both sides from 0 to 1 with respect to $z$ produces
%\[
%\displaystyle\int_{0}^{1}{\ff{\textrm{arctanh}^{2}\left(z\right)}{2z}}\,\mathrm{d}z
%\displaystyle\dfrac{1}{2}\int_{0}^{1}{\ff{\textrm{arctanh}^{2}\left(z\right)}{z}}\,\mathrm{d}z
%=
%\sum_{n=0}^{\mg}
%\sum_{m=0}^{n}\ff{1}{2m+1}
%\ff{1}{\left(2n+2\right)^{2}}
%=\sum_{
%\substack{n_{1}>n_{2}\\
%n_{1} \, \textrm{even},\, n_{2}\,\textrm{odd}}}
%\ff{1}{n_{1}^{2}n_{2}}\]
%We guess that 

\begin{thm}\label{e211}
For $N\geqslant1$, the following identity holds
\[\mu\left(2, \{1\}^{N-1}\right)=\dfrac{2^{N+1}-1}{2^{2N}}\,\zeta(N+1).\]
\end{thm}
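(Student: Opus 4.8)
The plan is to read the Guo--Lim--Qi expansion \eqref{atanh} as the generating function for the all-ones mixed values and then to raise its outermost exponent from $1$ to $2$ by the same divide-by-$z$-and-integrate device used in the proofs of Theorems~\ref{t322} and~\ref{z322}. Concretely, \eqref{atanh} says
\[
\frac{\operatorname{arctanh}^{N}(z)}{N!}
=
\sum_{\substack{n_N>n_{N-1}>\cdots>n_1\\ n_j\equiv j\ (\mathrm{mod}\ 2)}}
\frac{z^{n_N}}{n_N n_{N-1}\cdots n_1},
\]
so I would divide both sides by $z$ and integrate termwise over $[0,1]$. Since $\int_0^1 z^{\,n_N-1}\,\mathrm{d}z=1/n_N$, this replaces the factor $1/n_N$ by $1/n_N^{2}$ and produces directly
\[
\mu\k{2,\{1\}^{N-1}}
=\frac{1}{N!}\int_0^1 \frac{\operatorname{arctanh}^{N}(z)}{z}\,\mathrm{d}z .
\]
Because every summand is nonnegative, the termwise integration is legitimized by Tonelli's theorem (integrating against counting measure), so no uniform-convergence estimate is needed beyond the integrability discussed below.

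The second half is the evaluation of that single integral. I would substitute $z=\tanh u$, for which $\mathrm{d}z/z=\mathrm{d}u/(\sinh u\cosh u)=2\,\mathrm{d}u/\sinh(2u)$ and the limits become $u:0\to\infty$; the rescaling $w=2u$ then gives
\[
\frac{1}{N!}\int_0^1 \frac{\operatorname{arctanh}^{N}(z)}{z}\,\mathrm{d}z
=\frac{1}{2^{N}N!}\int_0^\infty \frac{w^{N}}{\sinh w}\,\mathrm{d}w .
\]
Expanding $1/\sinh w = 2\sum_{k\geqslant 0} e^{-(2k+1)w}$ and integrating each term with the Gamma integral $\int_0^\infty w^{N} e^{-(2k+1)w}\,\mathrm{d}w = N!/(2k+1)^{N+1}$ yields
\[
\int_0^\infty \frac{w^{N}}{\sinh w}\,\mathrm{d}w
=2\,N!\sum_{k=0}^{\mg}\frac{1}{(2k+1)^{N+1}}
=2\,N!\,\k{1-2^{-(N+1)}}\zeta(N+1),
\]
the interchange again being free by positivity of all terms. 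Substituting this back and simplifying $2^{1-N}\k{1-2^{-(N+1)}}=(2^{N+1}-1)/2^{2N}$ gives the asserted closed form.

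The computation is short, and the only genuine subtlety is bookkeeping at the endpoint $z=1$, where \eqref{atanh} sits exactly on the boundary $|z|<1$ of its disc of convergence and $\operatorname{arctanh}(z)\to\infty$. I must therefore confirm that $\int_0^1 \operatorname{arctanh}^N(z)/z\,\mathrm{d}z$ converges and that the boundary limit may be taken inside the sum; this is clean because $\operatorname{arctanh}(z)\sim-\tfrac12\log(1-z)$ as $z\to1^-$, so the integrand has only a logarithmic (hence integrable) singularity, and positivity lets the monotone convergence theorem pass $\lim_{a\to1^-}\int_0^a$ through the series. As a sanity check, the case $N=1$ recovers $\mu(2)=\sum_{n\ \mathrm{odd}}n^{-2}=\tfrac34\zeta(2)$, which matches $(2^{2}-1)/2^{2}\cdot\zeta(2)$.
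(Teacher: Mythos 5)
Your proposal is correct and follows essentially the same route as the paper: both reduce $\mu\left(2,\{1\}^{N-1}\right)$ to $\frac{1}{N!}\int_0^1 \operatorname{arctanh}^N(z)\,z^{-1}\,\mathrm{d}z$ by termwise integration of the Guo--Lim--Qi expansion, and both evaluate that integral by expanding a geometric series and landing on $\sum_{k\geqslant 0}(2k+1)^{-(N+1)}$. The only difference is cosmetic: you substitute $z=\tanh u$ and expand $1/\sinh w$, whereas the paper writes $\operatorname{arctanh}(z)=-\tfrac12\log\frac{1-z}{1+z}$ and expands $1/(1-z^2)$ on $[0,1]$ -- the two computations are related by $z=e^{-w}$.
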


\begin{proof}
%Recently, Guo--Lim--Qi announced that they found the Maclaurin series expansion of  integer powers of arctan and arctanh functions. For $n\geqslant0$, we have
%\[
%\ff{\textrm{arctanh}^{n}\left(z\right)}{n!}
%=
%\sum_{k=0}^{+\infty}
%\left(\prod_{m=1}^{n-1}\sum_{\ell_m=0}^{\ell_{m+1}}\dfrac{1}{2\ell_m + m}\right)\dfrac{z^{2k+n}}{2k+n}
%=
%\sum_{
%\substack{N_n>N_{n-1}>\cdots >N_1\\
%N_i \,\,\equiv \,\,i \,\,\textrm{(mod $2$)}
%}}
%\dfrac{z^{N_n}}{N_1N_2\ldots N_n}
%\]
Integrating equation (\ref{atanh}) with respect to $z$ from $0$ to $1$ produces
\[\mu\left(2,\{1\}^{N-1}\right) =\dfrac{1}{N!} \int_0^1\dfrac{\textrm{arctanh}^N(z)}{z}\,\mathrm{d}z.\]
Let us call this integral  $K(N)$.
We can indeed evaluate $K(N)$ by simply using the identity $$\mathrm{arctanh}\left(z\right) = -\dfrac{1}{2}\log\left(\dfrac{1-z}{1+z}\right).$$ 
We have
\[K(N)=\int_0^1 \dfrac{\mathrm{arctanh}^N(z)}{z}\,\mathrm{d}z = \dfrac{\left(-1\right)^N}{2^N}\int_0^1\dfrac{1}{z}\log^N\left(\dfrac{1-z}{1+z}\right)\mathrm{d}z=\dfrac{\left(-1\right)^N}{2^{N-1}}\int_0^1\dfrac{\log^N(z)}{1-z^2}\,\mathrm{d}z\]

\[=\dfrac{\left(-1\right)^N}{2^{N-1}}\sum_{k=0}^{+\infty}\int_0^1z^{2k}\log^{N}(z)\mathrm{d}z = \dfrac{\left(-1\right)^N}{2^{N-1}}\sum_{k=0}^{+\infty}\dfrac{\left(-1\right)^N N!}{\left(2k+1\right)^{N+1}}=\dfrac{N!\left[2^{N+1}-1\right]}{2^{2N}}\,\zeta(N+1).\]
Therefore, putting all things together produces the desired result. \end{proof}

\begin{ex}
	Substituting $N=1,2,3$ in Theorem \ref{e211} produces
\[
\mu\left(2, 1\right)=\ff{7}{16}\,\z(3), \,\,\, 
\mu\left(2, 1, 1\right)=\ff{15}{64}\,\z(4) = \dfrac{\pi^4}{384}, \,\,\, 
\mu\left(2, 1, 1, 1\right)=\ff{31}{256}\,\z(5).\]
\end{ex}
Indeed, $K(2N)$ appears to be a part (in fact, the boundary term) of $I(2N)$, that is
\begin{equation}
	\label{I2N}
	I(2N) = \dfrac{\left(2N\right)!}{2^{2N}}
	\left(\sum_{j=0}^{N-1}\dfrac{\left(-1\right)^j\pi^{2N-2j}}{\left(2N-2j\right)!}\,\eta\left(2j+1\right)\right) + \left(-1\right)^NK(2N).
\end{equation}
Notice that Hoffman \cite[Appendix]{hoffmann} highlights the relations
\[
t\left(2, 1\right)=t\left(2\right)\log(2)-\f{1}{2}\,t\left(3\right),\]
\[
t\left(2, 2, 1\right)=\f{1}{8}\,t\left(5\right)-\f{1}{14}\,t\left(2\right)t\left(3\right)+\f{1}{4}\,t\left(4\right)\log(2),\]
\[
t\left(2, 2, 2, 1\right)=
-\f{1}{32}\,t\left(7\right)-\f{3}{56}\,t\left(3\right)t\left(4\right)+\f{15}{248}\,t\left(2\right)t\left(5\right)+\f{1}{48}
\,t\left(6\right)\log(2).
\]
\\
With equations (\ref{I2}), (\ref{I4}), (\ref{I6}) and using $t\left(i\right)=\left(1-2^{-i}\right)\z(i)$, we observe that
\[
t\left(2, 1\right)=
\f{1}{2}
\k{
	\f{\pi^2}{4}\,\log(2)-\f{7}{8}\,\z(3)
}=
\f{I(2)}{2!},\]
\[
t\left(2, 2, 1\right)=
\f{1}{24}
\k{
	\f{\pi^4}{16}\,\log(2)-\f{9\pi^2}{16}\,\z(3)+
	\f{93}{32}\,\z(5)
}
=
\f{I(4)}{4!},\]
\[
t\left(2, 2, 2, 1\right)=
\ff{1}{720}
\k{\dfrac{\pi^6}{64}\,\log(2) - \dfrac{45\pi^4}{128}\,\zeta(3) + \dfrac{675\pi^2}{128}\,\zeta(5) -\dfrac{5715}{256}\,\zeta(7)}
=
\f{I(6)}{6!}.\]
This pattern seems to continue further. However, we have not figured out why yet. 
%Notice that $t\left(\twon,1\right)$  is a MtV appearing in Saha's conjecture \cite{saha}. We conjecture the following.
\begin{conj}
	Let $N \in \mathbb{N}$, then the following identity holds.
	\[t\left(\twon,1\right)=\dfrac{I(2N)}{\left(2N\right)!}=\dfrac{1}{2^{2N}}
	\left(\sum_{j=0}^{N-1}\dfrac{\left(-1\right)^j\pi^{2N-2j}}{\left(2N-2j\right)!}\,\eta\left(2j+1\right)+ \left(-1\right)^N2\left(1-2^{-2N-1}\right)\zeta(2N+1)\right).\]
	Notice that $t\left(\twon,1\right)$  is a MtV appearing in Saha's conjecture \cite{saha}.
\end{conj}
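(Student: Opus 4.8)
The plan is to prove the conjecture by passing to generating functions in an auxiliary variable $x$ that records the number of $2$'s in the index. On the analytic side, since $\cosh\left(x\arcsin z\right)=\sum_{N\geqslant0}\frac{x^{2N}}{\left(2N\right)!}\arcsin^{2N}(z)$, dividing by $z$, discarding the constant ($N=0$) term, and integrating over $[0,1]$ gives
\[
\int_0^1\frac{\cosh\left(x\arcsin z\right)-1}{z}\,\mathrm{d}z=\sum_{N\geqslant1}\frac{I(2N)}{\left(2N\right)!}\,x^{2N}.
\]
Thus it suffices to prove the single generating-function identity
\[
\sum_{N\geqslant1}t\left(\twon,1\right)x^{2N}=\int_0^1\frac{\cosh\left(x\arcsin z\right)-1}{z}\,\mathrm{d}z,
\]
after which the conjecture follows by comparing coefficients of $x^{2N}$ and invoking the closed form for $I(2N)$ from Proposition \ref{as_int}.

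First I would compute the left-hand generating function in closed form. In $t\left(\twon,1\right)$ the $N$ indices of weight $2$ run over distinct odd denominators and so are elementary symmetric functions of the family $\left\{\left(2j+1\right)^{-2}\right\}_{j\geqslant0}$, whose generating function is the product $\prod_{j\geqslant0}\left(1+\frac{x^2}{\left(2j+1\right)^2}\right)=\cosh\left(\frac{\pi x}{2}\right)$. Keeping the smallest (weight $1$) index free and summing the remaining indices strictly above it, one obtains the tail-product expression
\[
\sum_{N\geqslant1}t\left(\twon,1\right)x^{2N}=\sum_{m\geqslant0}\frac{1}{2m+1}\left(\prod_{j>m}\left(1+\frac{x^2}{\left(2j+1\right)^2}\right)-1\right),
\]
which converges since each tail product tends to $1$. (A quick check: the coefficient of $x^2$ is $\sum_{j>m\geqslant0}\frac{1}{\left(2j+1\right)^2\left(2m+1\right)}=t\left(2,1\right)$, as it should be.) The conjecture is now equivalent to matching this tail-product sum with the integral above.

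To finish, I would evaluate the integral and the sum separately and compare. Substituting $z=\sin\theta$ turns the integral into $\int_0^{\pi/2}\left(\cosh\left(x\theta\right)-1\right)\cot\theta\,\mathrm{d}\theta$; integrating by parts against $\cot\theta=\frac{\mathrm{d}}{\mathrm{d}\theta}\log\sin\theta$ (the boundary terms vanish) gives $-x\int_0^{\pi/2}\sinh\left(x\theta\right)\log\sin\theta\,\mathrm{d}\theta$. Inserting the Fourier expansion $\log\sin\theta=-\log2-\sum_{n\geqslant1}\frac{\cos\left(2n\theta\right)}{n}$ and using $\int_0^{\pi/2}\sinh\left(x\theta\right)\cos\left(2n\theta\right)\mathrm{d}\theta=\frac{x\left(\left(-1\right)^n\cosh\left(\pi x/2\right)-1\right)}{x^2+4n^2}$ expresses the integral as $\log2\left(\cosh\left(\frac{\pi x}{2}\right)-1\right)$ plus an explicit sum of Lorentzian terms in $\frac{1}{x^2+4n^2}$. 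On the other side I would expand each $1/P_m(x)$, where $P_m(x)=\prod_{j=0}^m\left(1+\frac{x^2}{\left(2j+1\right)^2}\right)$, by partial fractions in $x^2$ and resum over $m$, which produces Lorentzian terms of the same shape. The hard part will be this final matching: it is a genuine cross-parity identity, equating an odd-denominator multiple $t$-value with an even-power $\arcsin$ integral. The $\mathbb{W}$-machinery of Propositions \ref{l1}, \ref{even} and \ref{wallis} is built to distinguish the \emph{top} index (as in $t\left(3,\twon\right)$ and $\z\left(3,\twon\right)$), whereas here the distinguished weight-$1$ index sits at the \emph{bottom}, so the machinery does not apply verbatim and a new analytic input—either the Fourier/partial-fraction computation just sketched, or an independent evaluation of $t\left(\twon,1\right)$ via Hoffman's reduction of multiple $t$-values to alternating multiple zeta values \cite{hoffmann}—is exactly what must be supplied to close the argument.
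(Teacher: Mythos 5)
First, a point of context: the paper does not actually prove this statement. It is presented explicitly as a conjecture, supported only by the verifications for $N=1,2,3$ obtained by combining Hoffman's appendix relations for $t(2,1)$, $t(2,2,1)$, $t(2,2,2,1)$ with the values of $I(2)$, $I(4)$, $I(6)$ from Corollary \ref{arcsin_ints}; the authors say the pattern ``seems to continue'' but that they ``have not figured out why yet.'' So there is no proof in the paper against which to compare yours, and the honest benchmark is whether your argument actually closes the question.

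It does not, and you say so yourself. The pieces you do supply are correct: the generating identity $\sum_{N\geqslant1}I(2N)x^{2N}/(2N)!=\int_0^1\left(\cosh\left(x\arcsin z\right)-1\right)z^{-1}\,\mathrm{d}z$ is immediate from the $\cosh$ series; the tail-product expression for $\sum_{N}t\left(\twon,1\right)x^{2N}$ is a correct bookkeeping of elementary symmetric functions with the weight-$1$ index attached to the smallest variable (and your $x^2$-coefficient check is right); and the substitution $z=\sin\theta$, the integration by parts, and the evaluation $\int_0^{\pi/2}\sinh\left(x\theta\right)\cos\left(2n\theta\right)\mathrm{d}\theta=x\left(\left(-1\right)^n\cosh\left(\pi x/2\right)-1\right)/\left(x^2+4n^2\right)$ all check out. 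But the net effect is only to restate the conjecture as an identity between the tail-product sum $\sum_{m\geqslant0}\frac{1}{2m+1}\left(\prod_{j>m}\left(1+\frac{x^2}{\left(2j+1\right)^2}\right)-1\right)$ and $\log 2\left(\cosh\left(\frac{\pi x}{2}\right)-1\right)+\sum_{n\geqslant1}\frac{x^2\left(\left(-1\right)^n\cosh\left(\pi x/2\right)-1\right)}{n\left(x^2+4n^2\right)}$, and that identity carries essentially all of the original difficulty: the multiple-sum structure now lives in the resummation over $m$ of the partial fractions of the tail products, which you only gesture at. You are also right that the $\mathbb{W}$-machinery of Propositions \ref{l1}, \ref{even} and \ref{wallis} cannot be applied verbatim, since it singles out the top index rather than the bottom one. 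As it stands the proposal is a plausible and internally consistent reduction, not a proof; a genuinely new input (for instance Hoffman's reduction of MtVs to alternating MZVs \cite{hoffmann}, or a level-two analogue of duality in the spirit of \cite{kats}) is still required to establish the conjectured equality.
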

\section{Odd Euler Sums} \label{sec5}

\subsection{Definitions}

Let us begin with several definitions and notation.
Set 
\[
\mcO_n\left(p\right)=\sum_{k=1}^n\frac{1}{(2k-1)^p}\,\,\,\,\text{and}\,\,\,\, \mathcal{B}_n\left(p\right)=\sum_{k=1}^n\frac{(-1)^{k}}{(2k-1)^p}.
\]
Notice that
\[\mcO\left(p\right)=\mcO_{\infty}\left(p\right)=\left(1-2^{-p}\right)\z(p) \,\,\,\text{and}\,\,\,\mathcal{B}\left(p\right)=\mathcal{B}_{\infty}\left(p\right)=\beta\left(p\right)\]
where $\beta$ denotes the \emph{Dirichlet beta function}. In particular, 
$\beta\left(2\right)=\mathcal{G}$ (the Catalan constant). 
\begin{defn}
Call each of 
\[
\mcO\left(p, q\right)=\sum_{n=1}^\mg \frac{\mcO_n\left(p\right)}{(2n-1)^q}\,\,\,\,\text{and}\,\,\,\,\mathcal{B}\left(p, q\right)=\sum_{n=1}^\mg\frac{(-1)^{n}\,\mathcal{B}_n\left(p\right)}{(2n-1)^q}\]
an \emph{odd Euler sum}.
\end{defn}

\subsection{Reflection formula for Odd Euler Sums and Evaluation of $\mathcal{O}\left(q,q\right)$ and $\mathcal{B}\left(q,q\right)$}
%As mentioned earlier in the Introduction, 
Euler sums $\{H(p, q)\}$ satisfy reflection formula.  Here is a natural analog for odd Euler sums.
\begin{prop}
The following identities hold.
\begin{equation}\label{duality1}
\mcO\left(p, q\right)+\mcO\left(q, p\right)=
\mcO\left(p\right)\mcO\left(q\right)+\mcO\left(p+q\right),
\end{equation}
\begin{equation}\label{duality2}
\mathcal{B}\left(p, q\right)+\mathcal{B}\left(p,q\right)=
\beta\left(p\right)\beta\left(q\right)+\mathcal{B}\left(p + q\right).
\end{equation}
\end{prop}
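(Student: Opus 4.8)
The plan is to prove both reflection formulas by symmetrizing a double sum across its diagonal, exactly as in the classical $H(p,q)$ identity. First I would unfold the definition of $\mcO(p,q)$ into a double sum over the triangular index set $1\le k\le n$,
\[
\mcO(p,q)=\sum_{1\le k\le n}\frac{1}{(2k-1)^{p}(2n-1)^{q}},
\]
and likewise write $\mcO(q,p)$ as the same summand with $p,q$ interchanged. Relabelling the two summation indices in $\mcO(q,p)$ turns it into the identical summand $\frac{1}{(2k-1)^{p}(2n-1)^{q}}$ summed over the complementary region $k\ge n\ge 1$. Adding the two, every off-diagonal pair $(k,n)$ is counted exactly once while the diagonal $k=n$ is counted twice, so the total equals the unrestricted double sum over $\NN^{2}$ plus one extra copy of the diagonal. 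The unrestricted sum factors as $\bigl(\sum_{k}(2k-1)^{-p}\bigr)\bigl(\sum_{n}(2n-1)^{-q}\bigr)=\mcO(p)\mcO(q)$, and the extra diagonal is $\sum_{k}(2k-1)^{-(p+q)}=\mcO(p+q)$, giving (\ref{duality1}).

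For (\ref{duality2}) I would run the identical argument while carrying the sign weight $(-1)^{k+n}$ obtained once the two nested signs $(-1)^{k}$ and $(-1)^{n}$ are combined. The unrestricted, now signed, double sum again factors, this time as $\mathcal{B}(p)\mathcal{B}(q)=\beta(p)\beta(q)$. The one step requiring care — and the real content of the $\mathcal{B}$ case — is the diagonal correction: on $k=n$ the weight is $(-1)^{2k}=1$, so the extra diagonal copy is the \emph{unsigned} sum $\sum_{k}(2k-1)^{-(p+q)}=\mcO(p+q)$ rather than the alternating one. This is worth flagging, since it indicates that the final term on the right of (\ref{duality2}) is naturally $\mcO(p+q)$, and that the left-hand side should read $\mathcal{B}(p,q)+\mathcal{B}(q,p)$.

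The only genuine obstacle is justifying the rearrangements — the reindexing of $\mcO(q,p)$, the merge of the two triangular sums into product-plus-diagonal, and the factorization of the unrestricted sum. For $p,q\ge 2$ all series in sight are absolutely convergent, so Fubini's theorem legitimizes every interchange and the argument is fully rigorous; in the alternating $\mathcal{B}$ case absolute convergence still holds for $p,q\ge 2$, so the same justification applies with no conditional-convergence subtleties to manage.
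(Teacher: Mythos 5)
Your proof is correct and is essentially the paper's own argument: the paper interchanges the order of summation and writes the inner tail as $\mcO\left(q\right)-\mcO_{k-1}\left(q\right)$, which is exactly your ``two triangles $=$ full square $+$ diagonal'' decomposition carried out in a slightly different order, with the same appeal to absolute convergence. Both of the corrections you flag for (\ref{duality2}) are genuine: the left-hand side is a typo for $\mathcal{B}\left(p,q\right)+\mathcal{B}\left(q,p\right)$, and since the diagonal $k=n$ carries weight $\left(-1\right)^{2k}=1$, the correction term is the unsigned sum $\mcO\left(p+q\right)=\left(1-2^{-p-q}\right)\zeta(p+q)$ rather than an alternating one --- which is precisely what the paper's corollary (\ref{pqduality2}) uses, so your version of the identity is the one the rest of the paper actually relies on.
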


\begin{proof}
Since the double sum 
\[\sum_{k}\sum_{n}\f{1}{\left(2k-1\right)^p\left(2n-1\right)^q}\]
is absolutely convergent, we can  switch the order of summation to get 
\[
	\mcO\left(p, q\right)=
\sum_{n=1}^{\mg}
\k{
\sum_{k=1}^{n}\f{1}{\left(2k-1\right)^{p}}}
\f{1}{(2n-1)^{q}}
	=
	\sum_{k=1}^{\mg}
	\sum_{n=k}^{\mg}
	\f{1}{\left(2n-1\right)^{q}}
	\f{1}{\left(2k-1\right)^{p}}\]
	
	\[=
	\sum_{k=1}^{\mg}
	\k{\sum_{n=1}^{\mg}-\sum_{n=1}^{k-1}}
\f{1}{\left(2n-1\right)^{q}}
	\f{1}{\left(2k-1\right)^{p}}
	=
	\sum_{k=1}^{\mg}\f{\left[\mcO\left(q\right)-\mcO_{k-1}\left(q\right)\right]}{\left(2k-1\right)^{p}}
	\]
	
	\[=\mcO\left(p\right)\mcO\left(q\right)-
	\sum_{k=1}^{\mg}\k{\mcO_{k}\left(q\right)-
	\f{1}{\left(2k-1\right)^{q}}
	}\f{1}{\left(2k-1\right)^{p}}
	=\mcO\left(p\right)\mcO\left(q\right)-\mcO\left(q, p\right)+\mcO\left(p, q\right).
\]
Inserting the signs $(-1)^{k-1}$ and $(-1)^{n-1}$ appropriately, we can show the other one (\ref{duality2}). 
\end{proof}

\begin{cor}
%The following identities hold
%\[
%\mcO\left(2, 3\right)+
%\mcO\left(3, 2\right)=\frac{3}{4}\,\z(2)\,\f{7}{8}\,\z(3)+\f{31}{32}\,\z(5),\quad
%\mathcal{B}\left(2, 3\right)+
%\mathcal{B}\left(3, 2\right)=\beta\left(2\right)\beta\left(3\right)+\f{31}{32}\,\z(5).
%\]
%Apparently, 
The reflection formula enables us to evaluate odd Euler sums for $p=q$.
    \begin{equation}\label{pqduality1}\mcO\left(q,q\right) = \dfrac{1}{2}\left[\left(1-2^{-2q}\right)\zeta(2q)+\left(1-2^{-q}\right)^2\zeta(q)\,\zeta(q)\right],\end{equation}
        \begin{equation}\label{pqduality2}
           \mathcal{B}\left(q,q\right) = \dfrac{1}{2}\left[\left(1-2^{-2q}\right)\zeta(2q)+\beta\left(q\right)\beta\left(q\right)\right]. \end{equation}
       \end{cor}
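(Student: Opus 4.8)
The plan is to obtain $\mcO(q,q)$ and $\mathcal{B}(q,q)$ directly from the reflection formulas (\ref{duality1}) and (\ref{duality2}) by specializing $p=q$. When $p=q$, the left-hand side of (\ref{duality1}) becomes $\mcO(q,q)+\mcO(q,q)=2\,\mcO(q,q)$, and the right-hand side reads $\mcO(q)\mcO(q)+\mcO(2q)$. Thus I would simply solve the resulting linear equation to get
\[
\mcO(q,q)=\tfrac{1}{2}\left[\mcO(2q)+\mcO(q)\mcO(q)\right].
\]
The entirely analogous specialization of (\ref{duality2}) (interpreting the left-hand side as $2\,\mathcal{B}(q,q)$) yields
\[
\mathcal{B}(q,q)=\tfrac{1}{2}\left[\mcO(2q)+\beta(q)\beta(q)\right],
\]
where the single diagonal term $\mcO(p+q)$ becomes $\mcO(2q)$ in both cases.

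Next I would translate the symbols $\mcO(\cdot)$ appearing in these expressions into the closed forms recorded just before the Definition. Using the stated identity $\mcO(p)=\mcO_\infty(p)=(1-2^{-p})\zeta(p)$, I substitute $\mcO(2q)=(1-2^{-2q})\zeta(2q)$ and $\mcO(q)=(1-2^{-q})\zeta(q)$ into the first expression, giving exactly
\[
\mcO(q,q)=\tfrac{1}{2}\left[(1-2^{-2q})\zeta(2q)+(1-2^{-q})^2\zeta(q)\,\zeta(q)\right],
\]
which is (\ref{pqduality1}). For the second, I likewise substitute $\mcO(2q)=(1-2^{-2q})\zeta(2q)$ and leave $\beta(q)\beta(q)$ as is, recovering (\ref{pqduality2}). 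So the derivation is essentially a one-line specialization followed by a dictionary substitution, and no new analytic input is needed beyond the already-proved reflection formulas and the evaluations of $\mcO(p)$ and $\mathcal{B}(p)$.

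The only genuine subtlety — and the step I would flag as the main obstacle — concerns the alternating case (\ref{duality2}). As literally typeset, its left-hand side reads $\mathcal{B}(p,q)+\mathcal{B}(p,q)$, which for general $p\neq q$ cannot be the intended reflection identity (the two summands should be $\mathcal{B}(p,q)$ and $\mathcal{B}(q,p)$, as the proof's parenthetical remark about inserting signs $(-1)^{k-1}$ and $(-1)^{n-1}$ indicates). When $p=q$ this ambiguity evaporates, since $\mathcal{B}(q,p)=\mathcal{B}(q,q)=\mathcal{B}(p,q)$ regardless, so the left-hand side is unambiguously $2\,\mathcal{B}(q,q)$ and the specialization goes through cleanly. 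I would therefore present the $p=q$ corollary as an immediate consequence while noting that the diagonal case sidesteps the indexing question entirely. The remaining task is purely bookkeeping: verify the algebra of the substitutions and confirm that the $\beta(q)\beta(q)$ term is not simplified further, matching the asymmetric presentation in (\ref{pqduality2}) against the fully simplified $(1-2^{-q})^2\zeta(q)^2$ in (\ref{pqduality1}).
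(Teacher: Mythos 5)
Your derivation is correct and is exactly what the paper intends: the corollary follows by setting $p=q$ in the reflection formulas, dividing by $2$, and substituting $\mcO(m)=(1-2^{-m})\zeta(m)$; the paper itself offers no further argument. Your observation that the diagonal term in (\ref{duality2}) must be $\mcO(p+q)=(1-2^{-p-q})\zeta(p+q)$ rather than $\mathcal{B}(p+q)$ (since $(-1)^{2n}=1$ on the diagonal) is the right reading and is what makes the stated (\ref{pqduality2}) come out correctly.
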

\begin{ex}
The following identities hold.
\[
\mcO\left(2, 2\right)=\f{1}{2}
\k{\f{15}{16}\,\z(4)+\f{9}{16}\,\z^2(2)}
=\f{5\pi^4}{384}, \,\,\,
\mathcal{B}\left(3, 3\right)=\f{1}{2}
\k{\f{31}{32}\,\z(6)+\beta^2\left(3\right)}=\dfrac{1937\pi^6}{1935360}.
\]
\end{ex}
It is thus natural to ask whether it is possible to evaluate $\mcO\left(p, q\right)$ or $\mathcal{B}\left(p, q\right)$ for $p \ne q$. We spend some time discussing this question in the next two subsections. 
\subsection{Evaluation of $\mathcal{O}\left(p,q\right)$}
%In this subsection, 
We begin by evaluating $\mathcal{O}\left(2,3\right)$ by transforming $\mathcal{O}_n\left(p\right)$ to an integral and then transforming the initial double sum $\mathcal{O}\left(p,q\right)$ to a double integral followed by its reduction to single integrals involving the logarithm and polylogarithm function.

 Next, we show that our approach can be generalized to obtain an integral reduction for $\mathcal{O}\left(p,q\right)$. Let's start with  a  variant of $\mathcal{O}\left(2,2\right)$: %evaluate $\mathcal{O}\left(p,q\right)$ . Let's start with $\mathcal{O}\left(2,2\right)$: 
\[\sum_{n=1}^{+\infty}\dfrac{1}{\left(2n+1\right)^2}\sum_{k=0}^{n-1}\dfrac{1}{\left(2k+1\right)^2}=-\sum_{n=1}^{+\infty}\dfrac{1}{\left(2n+1\right)^2}\int_0^1\left[\sum_{k=0}^{n-1}z^{2k}\right]\log\left(z\right)\mathrm{d}z\]

\[=-\sum_{n=1}^{+\infty}\dfrac{1}{\left(2n+1\right)^2}\int_0^1\dfrac{\log\left(z\right)}{1-z^2}\,\mathrm{d}z + \int_0^1\left[\sum_{n=1}^{+\infty}\dfrac{z^{2n}}{\left(2n+1\right)^2}\right]\dfrac{\log\left(z\right)}{1-z^2}\,\mathrm{d}z\]

\[=\dfrac{3}{4}\,\zeta(2)\sum_{n=1}^{+\infty}\dfrac{1}{\left(2n+1\right)^2}-\int_0^1\left[\int_0^1\sum_{n=1}^{+\infty}\left(xz\right)^{2n}\log\left(x\right)\mathrm{d}x\right]\dfrac{\log\left(z\right)}{1-z^2}\,\mathrm{d}z\]
%
%\[=\dfrac{3}{4}\,\zeta(2)\sum_{n=1}^{+\infty}\dfrac{1}{\left(2n+1\right)^2}-\int_0^1 \int_0^1\dfrac{x^2 \log\left(x\right)z^2\log\left(z\right)}{\left(1-x^2z^2\right)\left(1-z^2\right)}\,\mathrm{d}z\,\mathrm{d}x\]

\[=\dfrac{3}{4}\,\zeta(2)\sum_{n=1}^{+\infty}\dfrac{1}{\left(2n+1\right)^2}-\int_0^1 x^2 \log\left(x\right)\int_0^1\dfrac{z^2\log\left(z\right)}{\left(1-x^2z^2\right)\left(1-z^2\right)}\,\mathrm{d}z\,\mathrm{d}x\]

\[=\dfrac{3}{4}\zeta(2)\sum_{n=1}^{+\infty}\dfrac{1}{\left(2n+1\right)^2} - \int_0^1\dfrac{x\log(x)\mathrm{Li}_2(x)}{1-x^2}\mathrm{d}x+\dfrac{1}{4} \int_0^1\dfrac{x\log(x)\mathrm{Li}_2(x^2)}{1-x^2}\mathrm{d}x+\dfrac{\pi^2}{8}\int_0^1\dfrac{x^2\log(x)}{1-x^2}\mathrm{d}x\]
\[=\dfrac{3}{4}\zeta(2)\sum_{n=1}^{+\infty}\dfrac{1}{\left(2n+1\right)^2} - \int_0^1\dfrac{x\log(x)\mathrm{Li}_2(x)}{1-x^2}\mathrm{d}x+\dfrac{1}{4} \int_0^1\dfrac{x\log(x)\mathrm{Li}_2(x^2)}{1-x^2}\mathrm{d}x-\dfrac{3}{4}\zeta(2)\sum_{n=1}^{+\infty}\dfrac{1}{\left(2n+1\right)^2} \]

\[= \dfrac{1}{4} \int_0^1\dfrac{x\log(x)\mathrm{Li}_2(x^2)}{1-x^2}\mathrm{d}x-\int_0^1\dfrac{x\log(x)\mathrm{Li}_2(x)}{1-x^2}\mathrm{d}x= \dfrac{1}{2} \int_0^1\dfrac{\log(x)\mathrm{Li}_2(x)}{1+x}\mathrm{d}x-\dfrac{7}{16}\int_0^1\dfrac{\log(x)\mathrm{Li}_2(x)}{1-x}\mathrm{d}x\]
\begin{equation}=\dfrac{1}{2}\left[-\dfrac{3}{16}\,\zeta(4)\right] -\dfrac{7}{16}\left[-\dfrac{3}{4}\,\zeta(4)\right]= -\dfrac{3}{32}\,\zeta(4) + \dfrac{21}{64}\,\zeta(4) = \dfrac{15}{64}\,\zeta(4)=\f{\pi^4}{384}.\end{equation}
Similarly, we have
%\[\mathcal{O}\left(2,3\right)=\dfrac{1}{2} \int_0^1\dfrac{\log\left(x\right)\mathrm{Li}_3\left(x\right)}{x\left(1-x^2\right)}\mathrm{d}x-\dfrac{1}{2}\int_0^1\dfrac{\log\left(x\right)\mathrm{Li}_3\left(-x\right)}{x\left(1-x^2\right)}\mathrm{d}x=\dfrac{31}{64}\,\zeta(5) + \dfrac{9}{32}\,\zeta(3)\,\zeta(2)\]
\[\mcO\left(2,3\right)=\sum_{n=1}^{+\infty}\dfrac{1}{\left(2n-1\right)^3}\sum_{k=1}^{n}\dfrac{1}{\left(2k-1\right)^2}=-\sum_{n=1}^{+\infty}\dfrac{1}{\left(2n-1\right)^3}\int_0^1\left[\sum_{k=1}^nz^{2k}\right]\dfrac{\log\left(z\right)}{z^2}\,\mathrm{d}z\]

\[=-\dfrac{1}{2}\int_0^1\dfrac{\log^{2}(x)}{x^2}\int_0^1\left[\sum_{n=1}^{+\infty}x^{2n}\left(1-z^{2n}\right)\right]\dfrac{\log\left(z\right)}{1-z^2}\,\mathrm{d}z\,\mathrm{d}x\]
%\[=\dfrac{\left(-1\right)^{q-1}}{2\left(q-1\right)!}\int_0^1\dfrac{\log^{q-1}(x)}{1-x^2}\left[\dfrac{\left[\mathrm{Li}_p\left(-x\right)-\mathrm{Li}_p\left(x\right)\right]}{x}\right]\mathrm{d}x\]
%\[=\dfrac{\left(-1\right)^{p-1}}{2\left(p-1\right)!}\left[\int_0^1\dfrac{\log^{p-1}\left(x\right)\mathrm{Li}_q\left(-x\right)}{x\left(1-x^2\right)}\,\mathrm{d}x-\int_0^1\dfrac{\log^{p-1}\left(x\right)\mathrm{Li}_q\left(x\right)}{x\left(1-x^2\right)}\,\mathrm{d}x \right]\]

\[=-\dfrac{1}{2}\int_0^1\dfrac{\log^{2}(x)}{1-x^2}\int_0^1\dfrac{\log\left(z\right)}{1-x^2z^2}\,\mathrm{d}z\,\mathrm{d}x=\dfrac{1}{4} \int_0^1\dfrac{\log^2(x)\,\mathrm{Li}_2\left(x\right)}{x\left(1-x^2\right)}\mathrm{d}x-\dfrac{1}{4}\int_0^1\dfrac{\log^2(x)\,\mathrm{Li}_2\left(-x\right)}{x\left(1-x^2\right)}\mathrm{d}x\]
\begin{equation}\label{gfvrify}=\dfrac{1}{4}\left(\dfrac{11}{16}\,\zeta(5) + \dfrac{3}{4}\,\zeta(2)\,\zeta(3)\right) - \dfrac{1}{4}\left(-\dfrac{5}{4}\,\zeta(5) - \dfrac{3}{8}\,\zeta(2)\,\zeta(3)\right)=\dfrac{31}{64}\,\zeta(5) + \dfrac{9}{32}\,\zeta(3)\,\zeta(2).\end{equation}
This approach can be easily generalized to the following. 
\begin{thm}\label{general_int1}
Let $p,q\in \NN$ and $\mathrm{Li}_n(z)$ represent the polylogarithm, then we have
\[\mathcal{O}\left(p,q\right)  =\dfrac{\left(-1\right)^{q}}{2\left(q-1\right)!}\left[\int_0^1\dfrac{\log^{q-1}\left(x\right)\mathrm{Li}_p\left(-x\right)}{x\left(1-x^2\right)}\,\mathrm{d}x-\int_0^1\dfrac{\log^{q-1}\left(x\right)\mathrm{Li}_p\left(x\right)}{x\left(1-x^2\right)}\,\mathrm{d}x \right].\]
\end{thm}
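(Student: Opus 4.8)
The plan is to reproduce, with $p$ and $q$ in place of the specific exponents, exactly the chain of manipulations just carried out for $\mcO(2,3)$. The engine of the method is the elementary integral representation
\[
\f{1}{(2j-1)^s}=\f{(-1)^{s-1}}{(s-1)!}\int_0^1 t^{2j-2}\log^{s-1}(t)\,\mathrm{d}t,
\]
which follows from $\int_0^1 t^{2j-2}\log^{s-1}(t)\,\mathrm{d}t=(-1)^{s-1}(s-1)!/(2j-1)^s$. First I would apply this with $s=p$ to every term of the inner sum $\mcO_n(p)=\sum_{k=1}^n(2k-1)^{-p}$ in a variable $z$, then perform the geometric summation $\sum_{k=1}^n z^{2k-2}=(1-z^{2n})/(1-z^2)$ to obtain $\mcO_n(p)=\tfrac{(-1)^{p-1}}{(p-1)!}\int_0^1\frac{(1-z^{2n})\log^{p-1}(z)}{1-z^2}\,\mathrm{d}z$. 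Simultaneously I would apply the same representation with $s=q$ to the outer factor $(2n-1)^{-q}$ in a second variable $x$.

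Next I would insert both representations into the definition of $\mcO(p,q)$ and interchange the summation over $n$ with the two integrations. Collecting the $n$-dependent factors leaves the inner series $\sum_{n\ge1}x^{2n}(1-z^{2n})=\frac{x^2}{1-x^2}-\frac{x^2z^2}{1-x^2z^2}$, and the crucial algebraic identity
\[
\f{1}{x^2(1-z^2)}\k{\f{x^2}{1-x^2}-\f{x^2z^2}{1-x^2z^2}}=\f{1}{(1-x^2)(1-x^2z^2)}
\]
collapses the whole expression to
\[
\mcO(p,q)=\f{(-1)^{p+q}}{(p-1)!(q-1)!}\int_0^1\int_0^1\f{\log^{q-1}(x)\log^{p-1}(z)}{(1-x^2)(1-x^2z^2)}\,\mathrm{d}z\,\mathrm{d}x.
\]
The important structural feature here is that the factor $(1-z^2)$ in the denominator cancels, so the apparent singularity of the inner sum at $z=1$ disappears.

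Then I would carry out the $z$-integration by expanding $\frac{1}{1-x^2z^2}=\sum_{m\ge0}x^{2m}z^{2m}$ and using $\int_0^1 z^{2m}\log^{p-1}(z)\,\mathrm{d}z=\frac{(-1)^{p-1}(p-1)!}{(2m+1)^p}$, which gives $\int_0^1\frac{\log^{p-1}(z)}{1-x^2z^2}\,\mathrm{d}z=\frac{(-1)^{p-1}(p-1)!}{2x}[\mathrm{Li}_p(x)-\mathrm{Li}_p(-x)]$ once one recognizes $\sum_{m\ge0}\frac{x^{2m+1}}{(2m+1)^p}=\frac12[\mathrm{Li}_p(x)-\mathrm{Li}_p(-x)]$ as the odd part of the polylogarithm. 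Substituting this back and simplifying the sign via $(-1)^{p+q}(-1)^{p-1}=(-1)^{q-1}=-(-1)^q$ produces precisely the claimed formula, the surviving $\log^{q-1}(x)$ coming from the outer variable and the $\mathrm{Li}_p$ from the inner one.

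The main obstacle is justifying the interchange of the summation over $n$ with the two integrals. I would handle this by observing that, for $t\in(0,1)$, $(-1)^{s-1}\log^{s-1}(t)=|\log t|^{s-1}\ge0$, so after absorbing the signs every integrand in the chain is nonnegative; Tonelli's theorem then legitimizes all rearrangements, the total mass being the convergent quantity $\mcO(p,q)$ itself. The integrability of the final single-variable integrand near $x=1$ is then clear, since $\log^{q-1}(x)/(1-x^2)$ is integrable there and $\mathrm{Li}_p(x)-\mathrm{Li}_p(-x)$ stays bounded on $[0,1]$. The remaining manipulations are the same routine steps already validated in the $\mcO(2,3)$ computation preceding the statement.
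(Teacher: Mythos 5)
Your proposal is correct and follows essentially the same route as the paper's own proof: represent both $(2k-1)^{-p}$ and $(2n-1)^{-q}$ by the logarithmic integral, sum the geometric series to get the kernel $\sum_{n}x^{2n}(1-z^{2n})$, collapse it to $\frac{1}{(1-x^2)(1-x^2z^2)}$, and evaluate the inner $z$-integral as the odd part of $\mathrm{Li}_p$. The only substantive addition is your explicit Tonelli justification for the interchanges, which the paper omits but which is a welcome strengthening.
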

\begin{proof}
As earlier, we begin by transforming  $\mathcal{O}_n\left(p\right)$ to an integral
\[\mcO\left(p,q\right)=\sum_{n=1}^{+\infty}\dfrac{1}{\left(2n-1\right)^q}\sum_{k=1}^{n}\dfrac{1}{\left(2k-1\right)^p}=\sum_{n=1}^{+\infty}\dfrac{1}{\left(2n-1\right)^q}\dfrac{\left(-1\right)^{p-1}}{\left(p-1\right)!}\int_0^1\left[\sum_{k=1}^nz^{2k}\right]\dfrac{\log^{p-1}(z)}{z^2}\,\mathrm{d}z\]

\[=\dfrac{\left(-1\right)^{q-1}}{\left(q-1\right)!}\dfrac{\left(-1\right)^{p-1}}{\left(p-1\right)!}\int_0^1\dfrac{\log^{q-1}(x)}{x^2}\int_0^1\left[\sum_{n=1}^{+\infty}x^{2n}\left(1-z^{2n}\right)\right]\dfrac{\log^{p-1}(z)}{1-z^2}\,\mathrm{d}z\,\mathrm{d}x\]

\[=\dfrac{\left(-1\right)^{q-1}}{\left(q-1\right)!}\dfrac{\left(-1\right)^{p-1}}{\left(p-1\right)!}\int_0^1\dfrac{\log^{q-1}(x)}{1-x^2}\int_0^1\dfrac{\log^{p-1}(z)}{1-x^2z^2}\,\mathrm{d}z\,\mathrm{d}x\]

\[=\dfrac{\left(-1\right)^{q-1}}{2\left(q-1\right)!}\int_0^1\dfrac{\log^{q-1}(x)}{1-x^2}\left[\dfrac{\left[\mathrm{Li}_p\left(x\right)-\mathrm{Li}_p\left(-x\right)\right]}{x}\right]\mathrm{d}x\]
%\[=\dfrac{\left(-1\right)^{q-1}}{2\left(q-1\right)!}\left[\int_0^1\dfrac{\log^{q-1}\left(x\right)\mathrm{Li}_p\left(x\right)}{x\left(1-x^2\right)}\,\mathrm{d}x-\int_0^1\dfrac{\log^{q-1}\left(x\right)\mathrm{Li}_p\left(-x\right)}{x\left(1-x^2\right)}\,\mathrm{d}x \right]\]

\[=\dfrac{\left(-1\right)^{q}}{2\left(q-1\right)!}\left[\int_0^1\dfrac{\log^{q-1}\left(x\right)\mathrm{Li}_p\left(-x\right)}{x\left(1-x^2\right)}\,\mathrm{d}x-\int_0^1\dfrac{\log^{q-1}\left(x\right)\mathrm{Li}_p\left(x\right)}{x\left(1-x^2\right)}\,\mathrm{d}x \right]\]
as desired. 
\end{proof}
\begin{rem} Note that \textsc{Piscos Mathematica Package} produces the integral evaluations
\[\int_0^1\dfrac{\log^{3}\left(z\right)\mathrm{Li}_3\left(z\right)}{z\left(1-z^2\right)}\,\mathrm{d}z= -\dfrac{3\pi^4}{64}\,\zeta(3) + \dfrac{5\pi^2}{16}\,\zeta(5) - \dfrac{489}{128}\,\zeta(7),\]
	
\[\int_0^1\dfrac{\log^{3}\left(z\right)\mathrm{Li}_3\left(-z\right)}{z\left(1-z^2\right)}\,\mathrm{d}z = \dfrac{3\pi^4}{64}\,\zeta(3) - \dfrac{5\pi^2}{32}\,\zeta(5) + \dfrac{273}{128}\,\zeta(7),\]

\[\int_0^1\dfrac{\log^{4}\left(z\right)\mathrm{Li}_4\left(z\right)}{z\left(1-z^2\right)}\,\mathrm{d}z = \dfrac{\pi^4}{24}\,\zeta(5) + \dfrac{35\pi^2}{32}\,\zeta(7) + \dfrac{579}{64}\,\zeta(9),\]

\[\int_0^1\dfrac{\log^{4}\left(z\right)\mathrm{Li}_4\left(-z\right)}{z\left(1-z^2\right)}\,\mathrm{d}z = -\dfrac{7\pi^4}{192}\,\zeta(5) - \dfrac{35\pi^2}{64}\,\zeta(7) - \dfrac{477}{32}\,\zeta(9),\]

\[\int_0^1\dfrac{\log^{5}\left(z\right)\mathrm{Li}_5\left(z\right)}{z\left(1-z^2\right)}\,\mathrm{d}z = -\dfrac{15\pi^6}{128}\,\zeta(5) + \dfrac{7\pi^4}{32}\,\zeta(7) + \dfrac{315\pi^2}{64}\,\zeta(9) - \dfrac{18825}{256}\,\zeta(11),\]

\[\int_0^1\dfrac{\log^{5}\left(z\right)\mathrm{Li}_5\left(-z\right)}{z\left(1-z^2\right)}\,\mathrm{d}z = \dfrac{15\pi^6}{128}\,\zeta(5) - \dfrac{49\pi^4}{256}\,\zeta(7) - \dfrac{315\pi^2}{128}\,\zeta(9) + \dfrac{1485}{32}\,\zeta(11),\]

\[\int_0^1\dfrac{\log^{6}\left(z\right)\mathrm{Li}_6\left(z\right)}{z\left(1-z^2\right)}\,\mathrm{d}z = \dfrac{\pi^6}{24}\,\zeta(7) + \dfrac{21\pi^4}{16}\,\zeta(9) + \dfrac{3465\pi^2}{128}\,\zeta(11) + \dfrac{72855}{256}\,\zeta(13),\]

\[\int_0^1\dfrac{\log^{6}\left(z\right)\mathrm{Li}_6\left(-z\right)}{z\left(1-z^2\right)}\,\mathrm{d}z = -\dfrac{31\pi^6}{768}\,\zeta(7) - \dfrac{147\pi^4}{128}\,\zeta(9) - \dfrac{3465\pi^2}{256}\,\zeta(11) - \dfrac{222885}{512}\,\zeta(13).\]
Combining these integral evaluations with Theorem \ref{general_int1} yields
   \[\mcO\left(3,4\right) = \dfrac{\pi^4}{128}\,\zeta(3) - \dfrac{5\pi^2}{128}\,\zeta(5) + \dfrac{127}{256}\,\zeta(7),\]

\[\mcO\left(4,5\right) = \dfrac{5\pi^4}{3072}\,\zeta(5) + \dfrac{105\pi^2}{3072}\,\zeta(7) + \dfrac{511}{1024}\,\zeta(9),\]

\[\mcO\left(5,6\right) = \dfrac{\pi^6}{1024}\,\zeta(5) - \dfrac{7\pi^4}{4096}\,\zeta(7) - \dfrac{63\pi^2}{2048}\,\zeta(9) + \dfrac{2047}{4096}\,\zeta(11),\]
\[\mcO\left(6,7\right) = \dfrac{7\pi^6}{122880}\,\zeta(7) + \dfrac{7\pi^4}{4096}\,\zeta(9)+\dfrac{231\pi^2}{8192}\,\zeta(11) + \dfrac{8191}{16384}\,\zeta(13).\]
\\
Consequently, using the reflection formula we have
\[\mcO\left(4,3\right)=\dfrac{\pi^4}{728}\,\zeta(3) + \dfrac{5\pi^2}{128}\,\zeta(5) + \dfrac{127}{256}\,\zeta(7),\]

\[\mcO\left(5,4\right) = \dfrac{13\pi^4}{1536}\,\zeta(5) - \dfrac{105\pi^2}{3072}\,\zeta(7) + \dfrac{511}{1024}\,\zeta(9),\]

\[\mcO\left(6,5\right) = \dfrac{\pi^6}{30720}\,\zeta(5) + \dfrac{7\pi^4}{4096}\,\zeta(7) + \dfrac{63\pi^2}{2048}\,\zeta(9) + \dfrac{2047}{4096}\,\zeta(11),\]

\[\mcO\left(7,6\right) = \dfrac{\pi^6}{1024}\,\zeta(7) - \dfrac{7\pi^4}{4096}\,\zeta(9)-\dfrac{231\pi^2}{8192}\,\zeta(11) + \dfrac{8191}{16384}\,\zeta(13).\]
\\
However, note that in these specific values of $\mcO\left(p,q\right)$, no simple pattern can be discerned. \end{rem}
\subsection{Evaluation of $\mathcal{B}\left(p,q\right)$}
We begin by evaluating $\mathcal{B}\left(2,3\right)$ the same way we evaluated $\mathcal{O}\left(2,3\right)$ in the previous subsection. Next, we show that our approach can be generalized to obtain an integral reduction for $\mathcal{B}\left(p,q\right)$. Let's begin with $\mathcal{B}\left(2,3\right)$: % As usual, we begin by transforming $\mathcal{B}_n\left(p\right)$ to an integral
\[\mathcal{B}\left(2,3\right) = \sum_{n=1}^{+\infty}\dfrac{\left(-1\right)^{n-1}}{\left(2n-1\right)^3}\sum_{k=1}^{n}\dfrac{\left(-1\right)^{k-1}}{\left(2k-1\right)^2}=\sum_{n=1}^{+\infty}\dfrac{\left(-1\right)^{n-1}}{\left(2n-1\right)^3}\int_0^1\left[\sum_{k=1}^n\left(-z^2\right)^k\right]\dfrac{\log\left(z\right)}{z^2}\,\mathrm{d}z\]

\[=\sum_{n=1}^{+\infty}\dfrac{\left(-1\right)^{n-1}}{\left(2n-1\right)^3}\int_0^1\dfrac{\left(-z^2\right)^n\log\left(z\right)}{1+z^2}\,\mathrm{d}z-\sum_{n=1}^{+\infty}\dfrac{\left(-1\right)^{n-1}}{\left(2n-1\right)^3}\int_0^1\dfrac{\log\left(z\right)}{1+z^2}\,\mathrm{d}z\]

\[=-\dfrac{1}{2}\int_0^1\dfrac{\log^2\left(x\right)}{x^2}\int_0^1\left[\sum_{n=1}^{+\infty}\left(-x^2\right)^n\left(-z^2\right)^n\right]\dfrac{\log\left(z\right)}{1+z^2}\,\mathrm{d}z\,\mathrm{d}x + \dfrac{\mcG \pi^3}{32}\]

\[=-\dfrac{\mcG}{2}\int_0^1\dfrac{\log^2\left(x\right)}{1+x^2}\,\mathrm{d}x - \dfrac{1}{8}\int_0^1\dfrac{\log^2\left(x\right)\mathrm{Li}_2\left(x^2\right)}{x\left(1+x^2\right)}+\dfrac{1}{2}\int_0^1\dfrac{\log^2\left(x\right)\mathrm{Li}_2\left(x\right)}{x\left(1+x^2\right)}\,\mathrm{d}x + \dfrac{\mcG \pi^3}{32}\]

\[= \dfrac{\mcG \pi^3}{32} - \dfrac{1}{8}\int_0^1\dfrac{\log^2\left(x\right)\mathrm{Li}_2\left(x^2\right)}{x\left(1+x^2\right)}+\dfrac{1}{2}\int_0^1\dfrac{\log^2\left(x\right)\mathrm{Li}_2\left(x\right)}{x\left(1+x^2\right)}\,\mathrm{d}x + \dfrac{\mcG \pi^3}{32}=\dfrac{1}{2}\int_0^1\dfrac{\log^2\left(x\right)\mathrm{Li}_2\left(x\right)}{x\left(1+x^2\right)}\,\mathrm{d}x \]

\[- \dfrac{1}{8}\int_0^1\dfrac{\log^2\left(x\right)\mathrm{Li}_2\left(x^2\right)}{x\left(1+x^2\right)}\,\mathrm{d}x=\dfrac{1}{2}\int_0^1\dfrac{\log^2\left(x\right)\mathrm{Li}_2\left(x\right)}{x\left(1+x^2\right)}\,\mathrm{d}x - \dfrac{1}{64}\int_0^1\dfrac{\log^2\left(x\right)\mathrm{Li}_2\left(x\right)}{x\left(1+x\right)}\,\mathrm{d}x\]
Some routine manipulations produce 
\begin{equation}\int_0^1\dfrac{\log^2\left(x\right)\mathrm{Li}_2\left(x\right)}{x\left(1+x\right)}\,\mathrm{d}x =\dfrac{83}{8}\,\zeta(5) - \dfrac{9}{2}\,\zeta(2)\,\zeta(3).\end{equation}
Next, we have
\[\int_0^1\dfrac{\log^2\left(x\right)\mathrm{Li}_2\left(x\right)}{x\left(1+x^2\right)}\,\mathrm{d}x = -\int_0^1 \int_0^1 \dfrac{\log\left(z\right)\log^2\left(x\right)}{\left(1-xz\right)\left(1+x^2\right)}\,\mathrm{d}x\,\mathrm{d}z\]

\[=-2\int_0^1\dfrac{z\log\left(z\right)\mathrm{Li}_2\left(z\right)}{1+z^2}\,\mathrm{d}z-\dfrac{3}{16}\,\zeta(3)\int_0^1\dfrac{z\log\left(z\right)}{1+z^2}\,\mathrm{d}z - \dfrac{\pi^3}{16}\int_0^1\dfrac{\log\left(z\right)}{1+z^2}\,\mathrm{d}z\]

\[=\int_0^1 \dfrac{\log\left(1+z^2\right)\mathrm{Li}_3\left(z\right)}{z}\,\mathrm{d}z-\int_0^1 \dfrac{\log\left(z\right)\log\left(1+z^2\right)\mathrm{Li}_2\left(z\right)}{z}\,\mathrm{d}z + \dfrac{3}{128}\,\zeta(3)\,\zeta(2) + \dfrac{\mcG \pi^3}{16}\]

\[=\dfrac{1}{8}\sum_{n=1}^{+\infty}\dfrac{\left(-1\right)^nH_{2n}}{n^4} + \dfrac{1}{4}\sum_{n=1}^{+\infty}\dfrac{\left(-1\right)^nH^{\left(2\right)}_{2n}}{n^3} + \dfrac{1}{4}\,\zeta(2)\sum_{n=1}^{+\infty}\dfrac{\left(-1\right)^{n+1}}{n^3}+ \dfrac{1}{2}\,\zeta(3)\sum_{n=1}^{+\infty}\dfrac{\left(-1\right)^{n+1}}{n^2}+ \dfrac{\mcG \pi^3}{16}\]
\begin{equation}+ \dfrac{3}{128}\,\zeta(3)\,\zeta(2) = \dfrac{\mcG \pi^3}{16} - \dfrac{9}{16}\,\zeta(2)\,\zeta(3)+ \dfrac{331}{256}\,\zeta(5) =\dfrac{\mcG \pi^3}{16} - \dfrac{3\pi^2}{32}\,\zeta(3) + \dfrac{331}{256}\,\zeta(5)\end{equation}
where the arising non-linear alternating harmonic series are evaluated by C. V\u{a}lean in \cite{MSE}: 
\[\sum _{n=1}^{+\infty}\frac{\left(-1\right)^{n-1} H_{2 n}^{(2)}}{n^3}=\frac{61 \pi^2}{192}\, \zeta (3)+\frac{1973 }{128}\zeta (5)+\frac{\pi^5}{16}-\frac{\pi}{128} \, \psi ^{(3)}\left(\frac{1}{4}\right)\]
\[\sum_{n=1}^{+\infty}\dfrac{\left(-1\right)^{n-1}H_{2n}}{n^4}=-\frac{\pi ^2}{3}\,\zeta (3)-\frac{437}{64}\, \zeta (5)-\frac{\pi ^5}{24}+\frac{\pi}{192}  \,\psi ^{(3)}\left(\frac{1}{4}\right)\]
 Putting all things together produces
\[\mathcal{B}\left(2,3\right) =\dfrac{331}{512}\,\zeta(5)-\dfrac{83}{512}\,\zeta(5)+ \dfrac{3\pi^2}{256}\,\zeta(3)- \dfrac{3\pi^2}{64}\,\zeta(3)+ \dfrac{\mathcal{G}\pi^3}{32} =\dfrac{31}{64}\,\zeta(5) - \dfrac{9\pi^2}{256}\,\zeta(3) + \dfrac{\mathcal{G}\pi^3}{32}\]
%\begin{rem} A similar approach can be used to show that
%\begin{equation}\label{inde_proof}\sum_{n=1}^{+\infty}\dfrac{1}{\left(2n-1\right)^2}\sum_{k=1}^{2n-1}\dfrac{\left(-1\right)^{k-1}}{2k-1} = -\dfrac{\pi^3}{64}-\dfrac{\pi}{16}\log^2(2) + \mcG\log\left(2\right) + 2\Im\left\{\mathrm{Li}_3\left(\dfrac{1+i}{2}\right)\right\}\end{equation}
%where we use the fact that
%\[\int_0^1 \dfrac{\mathrm{Li}_2(z)}{1+z^2}\,\mathrm{d}z = -\dfrac{5\pi^3}{384} - \dfrac{\pi}{32}\log^2(2) + \dfrac{1}{2}\log\left(2\right)\mcG + \Im\left\{\mathrm{Li}_3\left(\dfrac{1+i}{2}\right)\right\}\]
%and 
%\[\int_0^1 \dfrac{\mathrm{Li}_2(-z^2)}{1+z^2}\,\mathrm{d}z = -\dfrac{11\pi^3}{96} - \dfrac{\pi}{8}\log^2(2) + 2\log\left(2\right)\mcG + 4\Im\left\{\mathrm{Li}_3\left(\dfrac{1+i}{2}\right)\right\}\]
%Notice that equation (\ref{inde_proof}) can be used to produce an independent proof of Proposition \ref{cauchy_product}
%\end{rem}
where $\mcG$ is the Catalan constant. This approach can be easily generalized to the following.
\begin{thm}\label{general_int2}
Let $p,q\in \NN$ and $\mathrm{Li}_n(z)$ represent the polylogarithm, then we have
\[\mathcal{B}\left(p,q\right)  =\dfrac{\left(-1\right)^{q}}{2\left(q-1\right)!}\left[\int_0^1\dfrac{\log^{q-1}\left(x\right)\mathrm{Li}_p\left(-x\right)}{x\left(1+x^2\right)}\,\mathrm{d}x-\int_0^1\dfrac{\log^{q-1}\left(x\right)\mathrm{Li}_p\left(x\right)}{x\left(1+x^2\right)}\,\mathrm{d}x \right].\]
\end{thm}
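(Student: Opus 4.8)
The plan is to mirror the proof of Theorem \ref{general_int1} essentially verbatim, changing only that every geometric factor now carries an alternating sign. The point to watch is that these signs conspire to cancel the denominator $1+z^2$ produced by the inner series, so that the very same polylogarithmic kernel $1/(1-x^2z^2)$ that appeared in the $\mathcal{O}$-case resurfaces; once that is seen, the computation is identical.

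First I would record the elementary representation $\int_0^1 z^{a-1}\log^{m}(z)\,\mathrm{d}z=(-1)^m m!/a^{m+1}$, which, after absorbing the sign $(-1)^{k-1}$ into $(-z^2)^{k-1}$, gives
\[
\mathcal{B}_n(p)=\sum_{k=1}^{n}\frac{(-1)^{k-1}}{(2k-1)^{p}}
=\frac{(-1)^{p-1}}{(p-1)!}\int_0^1\left[\sum_{k=1}^{n}(-z^2)^{k-1}\right]\log^{p-1}(z)\,\mathrm{d}z .
\]
Summing the finite geometric series $\sum_{k=1}^{n}(-z^2)^{k-1}=\left(1-(-1)^n z^{2n}\right)/(1+z^2)$ collapses $\mathcal{B}_n(p)$ to a single integral. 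Applying the same representation to $(-1)^{n-1}/(2n-1)^q$ with an outer variable $x$ and inserting both factors into the definition of $\mathcal{B}(p,q)$ turns it into a double integral against $\log^{q-1}(x)\log^{p-1}(z)$, after which I would interchange the (absolutely convergent) sum with the integrals.

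The crux is the sum over $n$. Here the cross term collapses by $(-1)^{n-1}x^{2n-2}\cdot(-1)^n z^{2n}=(-1)^{2n-1}x^{2n-2}z^{2n}=-x^{2n-2}z^{2n}$ to a sign-definite geometric series in $x^2z^2$, giving
\[
\sum_{n=1}^{\infty}(-1)^{n-1}x^{2n-2}\left(1-(-1)^n z^{2n}\right)=\frac{1}{1+x^2}+\frac{z^2}{1-x^2z^2}.
\]
Multiplying by the surviving factor $1/(1+z^2)$ and combining fractions yields the clean identity
\[
\left[\frac{1}{1+x^2}+\frac{z^2}{1-x^2z^2}\right]\frac{1}{1+z^2}=\frac{1}{(1+x^2)(1-x^2z^2)},
\]
so the $1+z^2$ denominator cancels and the kernel $1/(1-x^2z^2)$ emerges exactly as in Theorem \ref{general_int1}. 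I expect this cancellation to be the main (and essentially the only) obstacle: it is what guarantees that the alternating structure does not spoil the reduction, and it must be checked with the signs $(-1)^{2n-1}=-1$ kept straight, since the wrong sign would produce $1/(1+x^2z^2)$ and destroy the reduction.

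Finally I would evaluate the inner $z$-integral by expanding $1/(1-x^2z^2)=\sum_{m\geqslant 0}x^{2m}z^{2m}$ termwise, obtaining
\[
\frac{(-1)^{p-1}}{(p-1)!}\int_0^1\frac{\log^{p-1}(z)}{1-x^2z^2}\,\mathrm{d}z=\sum_{m=0}^{\infty}\frac{x^{2m}}{(2m+1)^{p}}=\frac{1}{2x}\left[\mathrm{Li}_p(x)-\mathrm{Li}_p(-x)\right],
\]
the last equality being the standard extraction of the odd part of the polylogarithm. Substituting this back leaves an $x$-integral with denominator $x(1+x^2)$, and rewriting $(-1)^{q-1}=-(-1)^q$ to interchange the two polylogarithm terms produces precisely the stated formula. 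The remaining manipulations are bookkeeping identical to the $\mathcal{O}(p,q)$ case, so no further difficulty is anticipated.
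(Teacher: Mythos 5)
Your proof is correct and follows essentially the same route as the paper's: represent each term $1/(2k-1)^p$ as a log-moment integral, sum the alternating geometric series, observe that the $1+z^2$ factor cancels to leave the kernel $1/\bigl((1+x^2)(1-x^2z^2)\bigr)$, and evaluate the inner integral as the odd part of the polylogarithm. The only cosmetic difference is that you index the geometric series as $\sum_k(-z^2)^{k-1}$ where the paper uses $\sum_k(-z^2)^k/z^2$, which changes nothing of substance.
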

\begin{proof}
The proof is quite similar to that of Theorem \ref{general_int1}, we have
\[\mathcal{B}\left(p,q\right)=\sum_{n=1}^{+\infty}\dfrac{\left(-1\right)^n}{\left(2n-1\right)^q}\sum_{k=1}^{n}\dfrac{\left(-1\right)^k}{\left(2k-1\right)^p}=\sum_{n=1}^{+\infty}\dfrac{\left(-1\right)^n}{\left(2n-1\right)^q}\dfrac{\left(-1\right)^{p-1}}{\left(p-1\right)!}\int_0^1\left[\sum_{k=1}^{n}\left(-z^2\right)^k\right]\dfrac{\log^{p-1}\left(z\right)}{z^2}\,\mathrm{d}z\]

\[=\dfrac{\left(-1\right)^q}{\left(q-1\right)!}\dfrac{\left(-1\right)^p}{\left(p-1\right)!}\int_0^1\dfrac{\log^{q-1}\left(x\right)}{x^2}\int_0^1\left[\sum_{n=1}^{+\infty}\left(-x^2\right)^n\left(\left(-z^2\right)^n-1\right)\right]\dfrac{\log^{p-1}\left(z\right)}{1+z^2}\,\mathrm{d}z\,\mathrm{d}x\]

\[=\dfrac{\left(-1\right)^q}{\left(q-1\right)!}\dfrac{\left(-1\right)^p}{\left(p-1\right)!}\int_0^1\dfrac{\log^{q-1}\left(x\right)}{1+x^2}\int_0^1\dfrac{\log^{p-1}\left(z\right)}{1-x^2z^2}\,\mathrm{d}z\,\mathrm{d}x\]
\[=\dfrac{\left(-1\right)^{q-1}}{2\left(q-1\right)!}\int_0^1\dfrac{\log^{q-1}(x)}{1+x^2}\left[\dfrac{\left[\mathrm{Li}_p\left(x\right)-\mathrm{Li}_p\left(-x\right)\right]}{x}\right]\mathrm{d}x\]

\[=\dfrac{\left(-1\right)^{q}}{2\left(q-1\right)!}\left[\int_0^1\dfrac{\log^{q-1}\left(x\right)\mathrm{Li}_p\left(-x\right)}{x\left(1+x^2\right)}\,\mathrm{d}x-\int_0^1\dfrac{\log^{q-1}\left(x\right)\mathrm{Li}_p\left(x\right)}{x\left(1+x^2\right)}\,\mathrm{d}x \right]\]
as desired. 
\end{proof}

\section{Further Research and Acknowledgements}
%Several paths have not been explored yet and will be the subject of future work.
Several paths have not been explored yet and will be the subject of future work. Here, we record some ideas for our future research.
\\
(1) We can regard our inverse sine integral as a \emph{log-sine  integral}:
\[
I(n)=
\int_0^1 \ff{\arcsin^n(z)}{z}\,\mathrm{d}z
%=
%\underbrace{
	%	\left[
	%	\log(z) (\arcsin^n x)
	%	\right]_0^1}_{0}
=-
n
\int_0^1  \ff{\log (z)\arcsin^{n-1}(z)}{\sqrt{1-z^2}}\,\mathrm{d}z
=
-n
\int_0^{\pi/2}  {z^{n-1}\log(\sin (z))}\,\mathrm{d}z.\]
%\[
%=
%-N
%\int_0^{\pi/2} y^{N-1} \log(\sin y)dy.
%\]
We refer the reader to Borwein--Broadhurst--Kamnitzer \cite{bobrka} and  Williams-Yue \cite{wiyu} for relations of such log-sine integrals with central binomial series. Notice that they discussed 
\[
\int_0^{\pi/3} \log^\alpha \k{2\sin \left(\f{\vartheta}{2}\right)}\vartheta^\beta \,\mathrm{d}\vartheta
\quad \left(\alpha, \beta\geqslant 0\right)
\]
which is not quite same to ours but it should be possible to relate $I(n)$ with other central binomial series. This may serve as a subject of future work.
\\
(2) Together with multiple zeta values and  Euler sums evaluated in this article, we can compute  other MZVs. For example, we may show that
\begin{equation}\zeta(3,1,1) = 2\,\zeta(5) - \zeta(2)\,\zeta(3).\end{equation}
Indeed, 
V\u{a}lean \cite[page 303, problem 4.44]{valean} shows that
\[\sum_{n=1}^{+\infty}\dfrac{H_n}{n}\left(\zeta(3)-1-\dfrac{1}{2^3} - \dfrac{1}{3^3} - \cdots - \dfrac{1}{n^3}\right) = 2\,\zeta(2)\,\zeta(3) - \dfrac{7}{2}\,\zeta(5)\]
where $H_n$ is the $n$-th Harmonic number. Notice that the left--hand side is equal to
\[\sum_{m>n\geqslant k}\dfrac{1}{m^3nk} = \sum_{m>n> k}\dfrac{1}{m^3nk}+\sum_{m>n=k}\dfrac{1}{m^3nk}=\zeta(3,1,1) + \zeta(3,2)\]
Hence, with $\zeta(3,2)$ we just found above (\ref{spec}), we deduce that
\[\zeta(3,1,1) = 2\,\zeta(2)\,\zeta(3) -3\,\zeta(2)\,\zeta(3) - \dfrac{7}{2}\,\zeta(5) + \dfrac{11}{2}\,\zeta(5)=2\thinspace\zeta(5) - \zeta(2)\,\zeta(3).\]
(3) For a multi-index 
\[
\mathbf{i}=(a_1+1, \underbrace{1, \dots, 1}_{b_1-1}, a_2+1, \underbrace{1, \dots, 1}_{b_2-1}, \dots, a_k+1, \underbrace{1, \dots, 1}_{b_k-1}),
\]
with $a_i, b_i\geqslant1$, we define its \emph{dual}  to be
\[
\mathbf{i}^\dagger=(b_k+1, \underbrace{1, \ldots, 1}_{a_k-1}, b_{k-1}+1, \underbrace{1, \dots, 1}_{a_{k-1}-1}, \dots, b_1+1, \underbrace{1, \dots, 1}_{a_1-1}).
\]
Duality formula for MZVs claims that 
$\z(\mathbf{i})=\z(\mathbf{i}^\dagger)$ for all such indices. 

In particular, $\z(2, \{1\}^{n-1})=\z(n+1)$. 
After the preparation of this manuscript, we found out that 
Kaneko and Tsumura \cite{kats} introduced 
a \emph{multiple $T$-value}
\[
T\left(i_k, i_{k-1}, \ldots,i_1\right)
=
2^k
\sum_{
	\substack{n_k>n_{k-1}>\cdots >n_1\\
		n_j \,\,\equiv \,\,j \,\,\textrm{(mod $2$)}
}}
\dfrac{1}{n_k^{i_k}n_{k-1}^{i_{k-1}} \ldots n_1^{i_1}}
\]
with the factor $2^k$ for normalization (we changed the sum  convention to ours). They further showed that exactly the same duality formula hold for multiple $T$-values.
In particular, $T(2, \{1\}^{n-1})=T(n+1)$. 
This is equivalent to Theorem \ref{e211}, thus, we gave another proof of a special case of $T$-duality  by integration of powers of inverse hyperbolic tangent function.
\\
(4) These duality formulas come from  \emph{iterated integral expressions} for multiple zeta values with two kinds of integrals:
\[
\int \f{\mathrm{d}z}{1-z} \,\,\,\text{and}\,\,\, 
\int\f{\mathrm{d}z}{z}.
\]
For example, the famous Euler-Goldbach relation $\z(2, 1)=\z(3)$ is nothing but 
\[
\int_0^{1}\f{\mathrm{d}x_3}{x_3}
\int_0^{x_3}\f{\mathrm{d}x_2}{1-x_2}
\int_0^{x_2}\f{\mathrm{d}x_1}{1-x_1}
=
\int_0^{1}\f{\mathrm{d}z_3}{z_3}
\int_0^{z_3}\f{\mathrm{d}z_2}{z_2}
\int_0^{z_2}\f{\mathrm{d}z_1}{1-z_1}.
\]
Indeed, it should be possible to bring this idea of iterated integrals into the
study of multiple $t$-values, multiple zeta values and central binomial series with  more integrals in the form 
\[
\int \ff{\mathrm{d}z}{1-z^2}, 
\int \ff{\mathrm{d}z}{z\left(1-z^2\right)}\,\,\,\text{and}\,\,\, 
\int \ff{\mathrm{d}z}{\sqrt{1-z^2}}.\]
Definition \ref{W}, Proposition \ref{l1} and Proposition \ref{even} implicitly highlight this little idea.  

Notice that there exist iterated integral expressions for integer powers of inverse sine function and inverse hyperbolic tangent function:
\[
\f{\arcsin^n (z)}{n!}=
\int_0^z \f{\mathrm{d}z_1}{\sqrt{1-z_1^2}}
\int_0^{z_1} \f{\mathrm{d}z_2}{\sqrt{1-z_2^2}}
\cdots
\int_0^{z_{n-1}} \f{\mathrm{d}z_n}{\sqrt{1-z_n^2}}
\]
as remarked in \cite[(4.1)]{boch}. Quite similarly, we can show that 
\[
\f{\text{arctanh}^n (z)}{n!}=
\int_0^z \f{\mathrm{d}z_1}{{1-z_1^2}}
\int_0^{z_1} \f{\mathrm{d}z_2}{{1-z_2^2}}
\cdots
\int_0^{z_{n-1}} \f{\mathrm{d}z_n}{{1-z_n^2}}.
\]
Thus, as observed above, the equality 
\[
t\left(2, 1\right)=\f{I(2)}{2!}=
\dfrac{1}{2}\int_0^1 \f{\arcsin^2(z)}{z}\,\mathrm{d}z
\]
implies the relation of iterated integrals
\[
\int_0^{1}\f{\mathrm{d}x_3}{x_3}
\int_0^{x_3}\f{\mathrm{d}x_2}{x_2\left(1-x_2^2\right)}
\int_0^{x_2}\f{\mathrm{d}x_1}{1-x_1^2}
=
\int_0^{1}\f{\mathrm{d}z_3}{z_3}
\int_0^{z_3}\f{\mathrm{d}z_2}{\sqrt{1-z_2^2}}
\int_0^{z_2}\f{\mathrm{d}z_1}{\sqrt{1-z_1^2}}
\]
which is quite remarkable and not so obvious at a first glance. This can be indeed extended and we wish to study more about such iterated integral relations at some another opportunity. 
\\
(5) It remains an open problem to compute a closed expression for the integrals
\[\int_0^1\dfrac{\log^{q-1}\left(z\right)\mathrm{Li}_p\left(z\right)}{z\left(1+z^2\right)} \,\,\,\text{and}\,\,\, \int_0^1\dfrac{\log^{q-1}\left(z\right)\mathrm{Li}_p\left(z\right)}{z\left(1-z^2\right)}\]
as it would allow us to compute $\mcO\left(p,q\right)$ (Theorem \ref{general_int1}). 

%Moreover, evaluation of $\mcO\left(q,p\right)$ would allows us to compute  $t\left(p,q\right)$ since
%\[t\left(p,q\right) = \mcO\left(q,p\right)-\mcO\left(p+q\right)\]
%which can be easily proved and is left as an exercise to the reader.

The first author would like to thank Christophe Vignat for his guidance and support throughout the completion of this work.
The second author would like to thank Satomi Abe, Yuko Takada  and Michihito Tobe for sincerely supporting his research.

\end{document}